\documentclass[12pt, fceqn]{article}
\usepackage{hyperref}
\hypersetup{
	bookmarks=true,         
	unicode=false,          
	pdftoolbar=true,        
	pdfmenubar=true,        
	pdffitwindow=false,     
	pdftitle={Certificate},    
	pdfauthor={Dr. Harish Kumar},     
	pdfsubject={TEQIP certificates},   
	pdfcreator={Dr. Harish Kumar},   
	pdfproducer={},  
	pdfkeywords={Certificates,} {TEQIP} {Participation}, 
	pdfnewwindow=true,      
	colorlinks=false,       
	linkcolor=red,          
	citecolor=green,        
	filecolor=magenta,      
	urlcolor=cyan           
}
 \setlength{\textwidth}{6.25in}
 \setlength{\textheight}{8.75in}
\setlength{\topmargin}{0in} \setlength{\headsep}{0.15in}
\setlength{\headheight}{0in} \setlength{\oddsidemargin}{0.13in}
\setlength{\evensidemargin}{0in}
\usepackage{amsfonts, amsthm, latexsym, amssymb}
\newcommand{\BigO}[1]{\ensuremath{\operatorname{O}\bigl(#1\bigr)}}
\usepackage[fleqn]{amsmath}
\usepackage{graphicx}
\usepackage{graphics}
\usepackage{color}
\usepackage{comment}
\usepackage{algorithm}
\usepackage{algorithmic}
\usepackage{lscape}
\usepackage{footnote}

\usepackage{epsfig}
\usepackage{fancybox}
\usepackage{psfrag}
\usepackage{tabularx}
\usepackage{float}
\usepackage{multirow}
\usepackage{rotating}
\usepackage{cancel}
\usepackage{epstopdf}
\usepackage{booktabs}

\date{}

\DeclareMathOperator*{\Min}{\mbox{Min }}
\DeclareMathOperator*{\Max}{\mbox{Max }}

\DeclareMathOperator*{\Argmin}{\mbox{Argmin}}
\DeclareMathOperator*{\argmin}{\mbox{argmin}}

\newcommand{\om}{\omega}
\renewcommand{\O}{\Omega}

\hyphenpenalty = 1000

\newtheorem{thm}{THEOREM}[section]
\newtheorem{cor}[thm]{COROLLARY}
\newtheorem{lem}[thm]{LEMMA}
\newtheorem{prop}[thm]{PROPOSITION}
\theoremstyle{remark}

\theoremstyle{definition}
\newtheorem{defn}[thm]{DEFINITION}



\renewcommand{\O}{\Omega}

\textwidth 6.5in

  {\begin{list}{}{\setlength{\itemsep}{4pt}
  \parindent 0pt  \setlength{\parsep}{0pt}\setlength{\leftmargin}{+25pt}
  \setlength{\itemindent}{-\parindent}}}{\end{list}}

\begin{document}
\begin{center}
{\Large  \textbf{Integer Set Reduction for Stochastic Mixed-Integer Programming}}\\[12pt]


\mbox{\large Saravanan Venkatachalam and Lewis Ntaimo}\\
Department of Industrial and Systems Engineering, Texas A\&M
University, 3131 TAMU, \\ College Station, TX 77843, USA. \\
\mbox{saravanan@tamu.edu,ntaimo@tamu.edu}\\[8pt]

\normalsize
\end{center}

\begin{abstract}
Two-stage stochastic mixed-integer programming (SMIP) problems with general integer variables in the second-stage are generally difficult to solve. This paper develops the theory of integer set reduction for characterizing the subset of the convex hull of feasible integer points of the second-stage subproblem which can be used for solving the SMIP. The basic idea is to consider a small enough subset of feasible integer points that is necessary for generating a valid inequality for the integer subproblem. An algorithm for obtaining such a subset based on the solution of the subproblem LP-relaxation is then devised and incorporated into the Fenchel decomposition method for SMIP. To demonstrate the performance of the new integer set reduction methodology, a computational study based on randomly generated test instances was performed. The results of the study show that integer set reduction provides significant gains in terms of generating cuts faster leading to better bounds in solving SMIPs than using a direct solver.

\vskip 10pt \noindent {\bf Keywords:} Stochastic programming, integer programming, integer set reduction, cutting planes, Fenchel decomposition, multidimensional knapsack.
\end{abstract}

\section{Introduction}\label{sec-intro}
A two-stage stochastic mixed-integer programming (SMIP) problem involves optimizing the here-and-now (first-stage) costs plus expected future (second-stage) costs. Solving SMIP is still challenging and this paper makes strides towards that by introducing the theory of \emph{integer set reduction} for characterizing \emph{subsets} of the convex hull of feasible integer points of the second-stage subproblem that can be used to generate valid inequalities (cutting planes or cuts) for SMIP. The goal of integer set reduction is to speed up the cut generation routines and potentially lead to faster solution times for SMIP than direct solvers. In this paper we consider SMIP problems of the following form:

\begin{equation}
\begin{alignedat}{2}\label{eq-1a}
\text{SIP2: } \Max \,\,  & c^\top x + \mathcal{Q}_{E}(x)		   && \\
\text{s.t. }        & A x \leq b                             && \\
& x \in X.                               &&
\end{alignedat}
\end{equation}

\noindent In problem SIP2, $x \in \mathbb{R}_+^{n_1}$ denotes the first-stage decision vector, $c \in \mathbb{R}^{n_1}$ is the first-stage cost vector, $b \in \mathbb{R}^{m_1}$ is the first-stage right hand side, and $A \in \mathbb{R}^{m_1 \times n_1}$ is the first-stage constraint matrix. The set $X$ imposes binary restrictions on all or some components of $x$. The function $\mathcal{Q}_{E}(x)$ denotes the expected second-stage cost based on $x$. The function $\mathcal{Q}_{E}(x)$ is the expected recourse function and is given as follows:
\begin{equation}\label{eqn2}
\mathcal{Q}_{E}(x) = \mathbb{E}_\omega \Phi(q(\om),h(\om)-T(\om)x,\om),
\end{equation}
where $q(\omega) \in \mathbb{R}^{n_2}$ is the cost vector, $h(\omega) \in \mathbb{R}^{m_2}$ is the right hand side vector, and $T(\omega) \in  \mathbb{R}^{m_2 \times n_1}$ is the technology matrix. The second-stage function $\Phi$ is a value function of a mixed-integer program (MIP) and is given as follows:
\begin{equation}\label{eqn2a}
\Phi(\rho,\tau,\om) = \Max \{\rho^\top y(\om): Wy(\om) \leq \tau, 0 \leq y(\om) \leq u,  y(\om) \in Y\}.
\end{equation}
In the second-stage (scenario) problem \eqref{eqn2a}, $y(\om)$ denotes the recourse decision vector and $W \in \mathbb{R}_+^{m_2 \times n_2}$ is the fixed recourse matrix. The vector $u \in \mathbb{Z}_+^{n_2}$ is the upper bound on the second-stage decision variables. It is assumed that $T(\om):\Omega \mapsto \mathbb{R}^{m_2 \times n_1} $, $h(\om):\Omega \mapsto \mathbb{R}^{m_2}$ and $q(\om):\Omega \mapsto \mathbb{R}^{n_2}$ are measurable mappings defined on a probability space $(\Omega,\mathcal{F},\mathbb{P})$. The set $Y$ imposes integer restrictions on all or some components of $y(\om)$. The function $\mathcal{Q}_{E}(x)$ is the expected recourse function, where $\omega$ is a realization of a multivariate random variable $\tilde{\omega}$, and $\mathbb{E}_\omega$ denotes the mathematical expectation operator.

We consider problem SIP2 under the following assumptions:
\begin{itemize}
\item[\textbf{A1.}]
The random variable $\tilde{\om}$ is discrete with finitely many scenarios $\om \in \O$, each with probability of occurrence $p(\om)$ such that $\sum_{\om \in \O}p_{\omega} = 1$.
\item[\textbf{A2.}] The first-stage feasible set $\{A x \leq b, x \in X\}$ is nonempty.
\item[\textbf{A3.}] The right hand side vector $\tau$ and fixed-recourse matrix $W$ are nonnegative, and $W$ is rational.
\item[\textbf{A4.}] The second-stage feasible set $\{W y(\om) \leq \tau, 0 \leq y(\om) \leq u, y(\om) \in Y\}$ and is bounded and nonempty for all $x \in \{A x \leq b, x \in X\}$.
\end{itemize}

\noindent Assumption (A1) is needed for tractability while assumptions (A2) and (A4) are needed to guarantee that the problem has an optimal solution. Assumption (A3) and (A4) are needed for the proposed integer set reduction method to allow for a well-defined problem and finite convergence of the cutting method. Assumption (A4) implies the relatively recourse assumption, i.e., $\mathbb{E}_\omega [\,| \Phi(q(\om), h(\om)-T(\om)x),\omega |\,] < \infty$ for all $x \in \{A x \leq b, x \in X\}$. Because of assumption (A1), SIP2 can be written in extensive form as a so-called deterministic equivalent problem (DEP) as follows:
\begin{equation}
\begin{alignedat}{2}\label{eq-SIP2-dep}
\text{DEP}: \Max \,\, & c^\top x + \sum_{\omega \in \Omega} p_\omega q(\omega)^\top y(\omega)  && \\
\text{s.t. }        & A x \leq b                             && \\
&          T(\omega) x + W y(\omega) \leq h(\omega)         && \\
& x \in X, y(\omega) \in Y.               &&
\end{alignedat}
\end{equation}

In the above formulation, $p_\omega$ denotes the probability of occurrence for the scenario $\omega$, and $\sum{p_{\omega \in \Omega}} = 1$. Even for a reasonable number of scenarios in $\Omega$, DEP is a large-scale MIP. With integer variables in both first and second-stages, a moderate sized DEP may be difficult to solve using a direct solver such as CPLEX \cite{CPLEX}. This makes a decomposition approach a necessity for most practical sized problems. In SIP2, the type of decision variables (continuous, binary, integer) and in which stage they appear greatly influences algorithm design. The complexity of the solution method depends on the definitions of the sets $X$ and $Y$. When both these sets do not impose integer restrictions on the decision variables, the recourse function $\Phi(\rho,\tau,\om)$ is a well-behaved piecewise linear and convex function of $x$. Thus, Benders' decomposition \cite{benders1962partitioning} is applicable in this case \cite{wollmer1980two} and the L-shaped method \cite{van1969shaped} can be used to solve the problems. Assuming fixed recourse (i.e, the recourse matrix $W$ is independent of $\omega$),  the value function of $\Phi(\rho,\tau,\om)$ is a piecewise linear function in $x$. Hence, the L-shaped method works by approximating the linear functions from the subproblems by constructing optimality cuts in the first-stage based on the dual values from the subproblems. However, when $Y \in \mathbb{Z}^+$, the linear approximation procedure by L-shaped method is not viable, as the value function is generally discontinuous and is lower semicontinuous \cite{blair1982value}. Also, the function is non-convex and sub-additive \cite{schultz1993continuity}.
Hence, new algorithms or extensions of the L-shaped method are required to handle integer variables in the second or in both of the stages.

Cutting plane methods that can partially approximate the second-stage problems within the L-shaped method have been proposed for SMIPs with integer variables in the second-stage. In \cite{caroe1997cutting}, a lift-and-project cutting plane approach based on the ideas from \cite{balas1993lift} is used to solve problems with binary and continuous variables in both the first- and second-stage.
In \cite{sen2005c}, for problems with binary variables in first-stage, and binary and continuous variables in second-stage, disjunctive cuts are developed for the second-stage. The work in \cite{sherali1998reformulation} and \cite{sherali2002modification} uses the framework of reformulation linearization technique. The algorithm in \cite{sen2005c} is extended in \cite{sen2006decomposition} for problems with binary, continuous and discrete variables in the second-stage. Applications of SMIP in supply chain, air traffic control, and auto-carrier vehicle loading problems can be found at \cite{beier2015nodal}, \cite{corolli2015two}, and \cite{venkatachalam2014algorithms}, respectively.

Fenchel cuts are suggested in \cite{boyd1994fenchel}, and a number of characteristics are derived in \cite{boyd1993solving},  \cite{boyd1994solving} and \cite{boyd1995convergence}. The most important results from \cite{boyd1994fenchel}, \cite{boyd1994solving} and \cite{boyd1995convergence} are that Fenchel cutting planes are facet defining under certain conditions, and the use of Fenchel cuts in a cutting plane approach yields an algorithm with finite convergence. The work also highlights the fact that generating a Fenchel cut for binary programs is computationally expensive in general; therefore, problems with special structure are desirable to achieve faster convergence. Computational experiments demonstrating the effectiveness of Fenchel cuts  are presented for knapsack polyhedra in \cite{boyd1993generating} and for pure binary problems in \cite{boyd1994solving}.

Since the pioneering work in \cite{boyd1994fenchel}, only a few works in the literature have adopted Fenchel cuts. In \cite{saez2000solving}, Fenchel cuts are used to improve the bounds obtained from MIPs using Lagrangian relaxation. Fenchel cuts are used to solve deterministic capacitated facility location problems \cite{ramos2005solving}. This work compares Fenchel cuts to Lagrangian cuts in finding good relaxation bounds for their problem. In \cite{boccia2008cut}, Fenchel cutting planes are used for finding $p$ median nodes in a graph using a cut and branch approach. Fenchel cuts are first derived for two-stage SMIPs under a stage-wise decomposition setting in \cite{ntaimo2013fenchel} and are referred to as Fenchel decomposition (FD) cuts. Extensive study of FD cuts for two-stage SMIP with binary decision variables in both first-stage and second-stages are given in \cite{FDDECOMP} and \cite{venkatachalam2014algorithms}.

This work makes the following contributions to the literature on stochastic programming:  (a) deriving \emph{integer set reduction} theory for determining subsets of the second-stage feasible integer set to use for faster cut generation; (b) devising an algorithm for obtaining such subsets based on the solution of the subproblem LP-relaxation; (c) applying the integer set reduction in the context of FD cuts for solving SMIPs with general integer variables in the second-stage; and (d) reporting on a computational study that demonstrates the advantages of integer set reduction. In the literature, Fenchel cuts and FD cuts are derived and used for MIP and SMIP, respectively, with binary variables. This work is the first to derive these cuts for MIP and SMIP with \emph{general} integer variables.

The rest of this paper is organized as follows: Integer set reduction theory is derived in Section 2 and is applied to the FD setting for two-stage SMIP in Section 3. A computational study to illustrate the performance of the new methodology is reported in Section 4. Finally, conclusions and directions for future research are given in Section 5.

\section{Integer Set Reduction for Cut Generation}
In this section we develop the integer set reduction theory to characterize the properties of the convex hull of integer points needed for generating a valid inequality for the second-stage feasible set. We then use the theory to devise an algorithm for obtaining a reduced set of integer points needed for generating a valid inequality based on the second-stage LP-relaxation. To illustrate the concepts, we use simple numerical examples. Next we start with some preliminaries.

\subsection{Preliminaries}
We will now provide some important definitions needed in the derivation of a valid inequality for SIP2 with general integer variables in the second-stage. Suppressing $\om$ for simplicity in exposition, the feasible set for the second-stage subproblem \eqref{eqn2a} can be given as follows:
\begin{equation}\label{eqn5}
F^{IP} = \{y:Wy \leq \tau, 0 \leq y \leq u, y \in Y \}.
\end{equation}
Thus subproblem \eqref{eqn2a} can now be rewritten as
\begin{equation}\label{eqn2aIP}
\Max \{\rho^\top y: y \in F^{IP}\}.
\end{equation}
The LP-relaxation feasible set to $F^{IP}$ can be expressed as
\begin{equation}
F^{LP} = \{y: Wy \leq \tau, 0 \leq y \leq u, y \in \mathbb{R}_+^{n_2} \} \notag
\end{equation}
and the LP-relaxation to \eqref{eqn2aIP} given as
\begin{equation}\label{eqn2aLP}
\Max \{\rho^\top y: y \in F^{LP}\}.
\end{equation}
We shall denote by $C(F^{IP})$ the convex hull of integer points in $F^{IP}$. Let $F^{IP}_{\upsilon} \subseteq F^{IP}$ and  $C(F^{IP}_{\upsilon})$ will denote the convex hull of integer points in $F^{IP}_{\upsilon}$.

Now let $\hat{y} \in F^{LP}$ be the optimal solution to subproblem \eqref{eqn2aLP} for a given $x \in \{A x \leq b, x \in X\}$. Our goal is to use the point $\hat{y}$ and restrict the derivation of valid inequalities (cuts) to a relatively small subset of integer points $F^{IP}_{\upsilon}$ instead of $F^{IP}$ so that a generated cut valid for $C(F^{IP}_{\upsilon})$ is valid for $C(F^{IP})$ and cuts off $\hat{y}$. The hope is that doing so would result in reduced cut generation computational time, thus leading to fast cutting plane methods for SIP2. Therefore, it is desirable to have $F^{IP}_{\upsilon} \subset F^{IP}$ such that
$|F^{IP}_{\upsilon}| << |F^{IP}|$ so that generating cuts over $F^{IP}_{\upsilon}$ is not expensive.

\begin{defn}\label{defn1}
An inequality is said to be \textit{valid} for the set $C(F^{IP})$ if it is satisfied by every point in the set. A cut with respect to a point $\hat{y} \notin C(F^{IP})$ is a valid inequality for $C(F^{IP})$ that is violated by $\hat{y}$.
\end{defn}

\begin{figure}[H]
	\centering
	\includegraphics[scale=1.00]{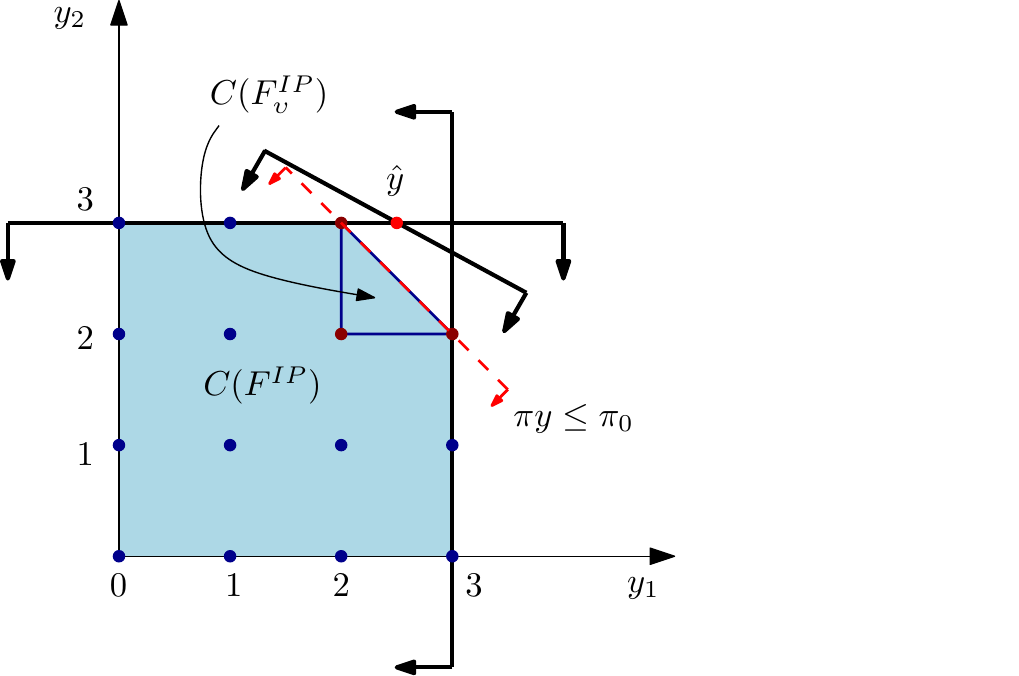}\\
	\caption{Separation problem with reduced integer feasible set}
	\label{sep-fig2}
\end{figure}

Generating a valid inequality using the subset $F^{IP}_{\upsilon}$ is depicted in Figure \ref{sep-fig2}. In the figure, given $\hat{y} \in F^{LP}$, the three points defining the triangle constitute $F^{IP}_{\upsilon}$ and are used to generate the cut (dashed lines) of the form $\pi^\top y \leq \pi_0$. We devise a methodology to obtain  $F_{\upsilon}^{IP}$, and subsequently use it to generate a valid inequality.  Also, since $F_{\upsilon}^{IP} \subseteq F^{IP}$, we need to form $F_{\upsilon}^{IP}$ such that the generated valid inequality does not cut off any integer points in $F^{IP}$.

In subproblem \eqref{eqn2a}, the general integer variable vector $y$ is bounded by the vector $u$. Let $I$ be the set of indices of the components of $y$ which are integer variables, and $K$ be the set of indices of the constraints in \eqref{eqn2a}. Also, let the elements of the matrix $W$ be denoted by $w_{kt}$, where $k \in K$ is the constraint index, and $t \in I$ is the decision variable index. We also make the following assumption regarding $W$:

\begin{itemize}
\item[\textbf{A5.}] The polytope defined by $C(F^{IP})$ is assumed to be full dimensional with dimension $n_2$.
\end{itemize}

Let $P$ be an index set for integer points in $F^{IP}$ such that for $p \in P$ we have an integer point $y^p \in F^{IP}$. The $i^{th}$ component of $y^p$ will be denoted $y_i^p$. In Figure \ref{sep-fig2}, for example, the integer point $y^p = (3, 2)$ has components $y_1^p=3$ and $y_2^p=2$. Let $y^{IP}$ be the optimal solution to \eqref{eqn2a} and $\hat{y}$ be the optimal solution to the LP-relaxation. Also, define $\bar{y} = \lfloor \hat{y}\rfloor$ so that the components $\bar{y}_i$, for all $i = 1, \cdots, n_2$ are initialized as $\bar{y}_i = \lfloor \hat{y}_i\rfloor$. Let  $d_{ij}^p$ denote the distance from $y^p$ to the boundary of $F^{LP}$ along the $y^p_j$ axis for all $i, j \in I, i \neq j$. Then $d_{ij}^p$  can be calculated as follows:

\begin{equation}\label{dval}
\begin{alignedat}{2}
d_{ij}^{p} = \min \left\{ \left( \left[ \tau_k - \sum_{t \in I: t \neq i, t \neq j}w_{kt}\hat{y}_t -w_{ki}\bar{y}_i \right] /  w_{kj} \right)_{\forall k \in K}  - y_j^p, \, \, u_j - y_j^p \right\} ,
\end{alignedat}
\end{equation}
where $\tau_k$ is the right hand side for constraint $k \in K$, and $u_j$ is the upper bound for axis $j$. For each $y^p_i$ axis, $i =1,\cdots,n_2$, let $f_i(y^p)$ be the shortest distance from $y^p$ to the boundary of $F^{LP}$ along the $y^p_j$ axis, $i \neq j$.  Then $f_i(y^p)$ can be calculated as follows:
\begin{equation}\label{dval1}
\begin{alignedat}{2}
f_i(y^p) = \min_{j \in I, \ i \neq j} \left\lbrace d_{ij}^{p}  \right\rbrace.
\end{alignedat}
\end{equation}
For each $y^p_i$ axis, $i =1,\cdots,n_2$, let $f'_i(y^p)$ be the shortest distance from $y^p$ to the boundary of $F^{LP}$ along the $y^p_i$ axis.  Then $f'_i(y^p)$ can be calculated as follows:
\begin{equation}\label{dval2}
\begin{alignedat}{2}
f'_i(y^p) = \min_{j \in I, \ i \neq j} \left\lbrace d_{ji}^{p}  \right\rbrace.
\end{alignedat}
\end{equation}

 Next, we establish that there exists a set $F_{\upsilon}^{IP} \subseteq F^{IP}$, which is sufficient for generating a valid inequality for $F^{IP}$.
\\

\begin{lem}\label{lemma1}
	Given $F^{IP}$, there exists a set $F_{\upsilon}^{IP} \subseteq F^{IP}$ such that $F_{\upsilon}^{IP}$ is sufficient for generating a valid inequality  for $F^{IP}$.
\end{lem}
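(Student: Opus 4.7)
The plan is to settle existence by a direct constructive argument. The simplest witness is to take $F^{IP}_{\upsilon}=F^{IP}$ itself: the inclusion is trivial, $C(F^{IP}_{\upsilon})=C(F^{IP})$, and so every inequality valid for $C(F^{IP}_{\upsilon})$ is valid for $C(F^{IP})$ in the sense of Definition \ref{defn1}. This already proves the lemma, so the purpose of the argument is really to expose the structural reason why nontrivial strict subsets also work, since the rest of the paper exploits this.

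For the more informative witness I would use the finiteness forced by assumption (A4). The LP-relaxation set $\{y:Wy\le\tau,\ 0\le y\le u\}$ is bounded, and the integrality restriction $y\in Y$ makes $F^{IP}$ a finite set. Hence $C(F^{IP})$ is a bounded polytope, and by the Minkowski--Weyl theorem it equals the convex hull of its vertex set $V$, with $V\subseteq F^{IP}$. Setting $F^{IP}_{\upsilon}=V$ gives $C(F^{IP}_{\upsilon})=\mathrm{conv}(V)=C(F^{IP})$, so any inequality $\pi^\top y\le\pi_0$ valid on $C(F^{IP}_{\upsilon})$ is automatically satisfied by every vertex of $C(F^{IP})$, hence by every convex combination of them, and in particular by every point of $F^{IP}$. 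This delivers a subset that is sufficient in the sense of the lemma and is in general strictly smaller than $F^{IP}$.

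The main obstacle is less the construction than pinning down the right reading of ``sufficient for generating a valid inequality''. Once one fixes the interpretation that the inequality obtained from $F^{IP}_{\upsilon}$ must remain valid for $F^{IP}$, the statement reduces to the polyhedral fact that the containment $C(F^{IP}_{\upsilon})\subseteq C(F^{IP})$ is an equality. That equality is easy to guarantee by retaining the vertices of $C(F^{IP})$, and it also motivates the sharper constructions later in the section, where the LP-optimum $\hat y$ together with the distance measures $d_{ij}^{p}$, $f_i(y^p)$, and $f'_i(y^p)$ are used to choose a still smaller $F^{IP}_{\upsilon}$ retaining only those integer points needed to separate $\hat y$ from $C(F^{IP})$.
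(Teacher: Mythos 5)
Your proposal is correct, and in the nontrivial part it takes a genuinely different route from the paper. Both you and the paper first dispose of existence with the trivial witness $F_{\upsilon}^{IP}=F^{IP}$; the divergence is in how a \emph{proper} subset is exhibited. You invoke boundedness from (A4), conclude $F^{IP}$ is finite, and take $F_{\upsilon}^{IP}$ to be the vertex set $V$ of the polytope $C(F^{IP})$, so that $\mathrm{conv}(V)=C(F^{IP})$ and validity of any inequality over $C(F_{\upsilon}^{IP})$ transfers to $C(F^{IP})$ automatically. The paper instead argues under the full-dimensionality assumption (A5): since $\dim C(F^{IP})=n_2$ one has $\vert F^{IP}\vert \geq n_2+1$, while a facet of $C(F^{IP})$ is spanned by only $n_2$ affinely independent integer points, so a set of $n_2$ such points is a proper subset sufficient to construct a facet-defining (hence valid) inequality. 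The trade-off is instructive: your vertex argument is airtight and needs neither (A5) nor any claim about which inequality the generation procedure returns, but it guarantees no real reduction --- for instance, in binary problems every feasible point is a vertex of the hull, so $V=F^{IP}$ --- whereas the paper's counting argument is what supports the cardinality claim $\vert F_{\upsilon}^{IP}\vert = n_2$ in Proposition \ref{prop1} and motivates the ISG algorithm. On the other hand, the paper's argument is looser on exactly the point you flag as the main obstacle: an inequality generated from an arbitrary $n_2$-point subset is valid for $F^{IP}$ only if it is the facet inequality through those points (equivalently, only if validity over $C(F_{\upsilon}^{IP})$ implies validity over $C(F^{IP})$), a step the paper asserts rather than proves, and which your choice $F_{\upsilon}^{IP}=V$ sidesteps by making the two hulls coincide by construction.
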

\begin{proof}
	Given $F^{IP}$, then either $F_{\upsilon}^{IP} = F^{IP}$ or $F_{\upsilon}^{IP} \subset F^{IP}$. For $F_{\upsilon}^{IP} = F^{IP}$, it is obvious that entire set $F^{IP}$ can be used for generating a valid inequality to cut off a fractional point $\hat{y}$. Now consider the case $F_{\upsilon}^{IP} \subset F^{IP}$. $\vert F^{IP} \vert \geq n_2+1$ as $C(F^{IP})$ is full dimensional by assumption (A5). To generate a valid inequality, $\vert F_{\upsilon}^{IP} \vert \geq n_2$, as $n_2$ affinely independent integer points are needed to construct a facet for $C(F^{IP})$. Then $F_{\upsilon}^{IP}$ can be constructed such that there exists an integer point $\, \, y^{p'} \notin F_{\upsilon}^{IP}$ and $ y^{p'} \in F^{IP}$. Hence, $\vert F^{IP} \vert > \vert F_{\upsilon}^{IP} \vert$ which gives $F_{\upsilon}^{IP} \subset F^{IP}.$
\end{proof}
\bigskip

We state the required properties for an integer point $y^p \in F_{\upsilon}^{IP}$ in the following corollary.
\\

\begin{cor}\label{corr1}
	Let $y^p \in F_{\upsilon}^{IP}$. Then either of the following must be true:
	\begin{itemize}
		\item[(i)]  $d_{ij}^{p} < 1$, $\forall i,j \in I \vert j \neq i$ or
		\item[(ii)] $d_{ij}^{p} \geq 1$ and any integer point $y^{p'} \in F^{IP}$ such that $y_i^{p'} -  y_i^{p} \geq 1, \forall i \in I$ for at least one index $i \in I$ also belongs to $F_{\upsilon}^{IP}$.
	\end{itemize}
\end{cor}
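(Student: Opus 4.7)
The plan is to proceed by case analysis on the values of the distances $d_{ij}^p$. Given $y^p \in F_{\upsilon}^{IP}$, either every $d_{ij}^p < 1$ for all $i, j \in I$ with $j \neq i$, in which case condition (i) holds immediately and there is nothing further to show, or else there exist indices $i,j \in I$ for which $d_{ij}^p \geq 1$. The bulk of the work lies in the second case, where one must establish condition (ii): that any integer point $y^{p'} \in F^{IP}$ reachable from $y^p$ by a shift of at least one unit along the axis direction certified by $d_{ij}^p \geq 1$ necessarily also belongs to $F_{\upsilon}^{IP}$.

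For this second case, I would argue by contradiction using the sufficiency property of $F_{\upsilon}^{IP}$ guaranteed by Lemma \ref{lemma1}. Suppose that $d_{ij}^p \geq 1$ for some $i, j$, but that there exists an integer point $y^{p'} \in F^{IP}$ with $y_j^{p'} - y_j^p \geq 1$ such that $y^{p'} \notin F_{\upsilon}^{IP}$. By the very definition of $d_{ij}^p$ in equation \eqref{dval}, the condition $d_{ij}^p \geq 1$ certifies that the line segment from $y^p$ extending one unit along the $j$-th axis stays within $F^{LP}$, so in particular $y^{p'} \in F^{IP}$ is well-defined with integer coordinates. The strategy is then to exhibit a valid inequality $\pi^\top y \leq \pi_0$ for $C(F_{\upsilon}^{IP})$ that separates $y^p$ from $y^{p'}$. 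Because $y^p$ and $y^{p'}$ are distinct lattice points and, by assumption (A5), $C(F^{IP})$ is full-dimensional, one can carve a supporting hyperplane of $C(F_{\upsilon}^{IP})$ through the slab between these two adjacent integer points. Such an inequality would be valid for $C(F_{\upsilon}^{IP})$ (since $y^{p'}$ is not a member) while cutting off $y^{p'} \in F^{IP}$, contradicting that $F_{\upsilon}^{IP}$ is sufficient for generating valid inequalities for $F^{IP}$.

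The main obstacle I expect is making the separating hyperplane step fully rigorous. It is geometrically intuitive that excluding an adjacent lattice point $y^{p'}$ from a full-dimensional convex hull allows one to slide a valid cut into the unit-width gap between $y^p$ and $y^{p'}$, but a careful proof must invoke the structure of $C(F_{\upsilon}^{IP})$ — in particular that it contains $y^p$ but not $y^{p'}$, and that the hyperplane separation theorem yields a half-space containing $C(F_{\upsilon}^{IP})$ and strictly excluding $y^{p'}$. Once this geometric lemma is in place, combining the two cases completes the argument: either every direction of motion from $y^p$ is already blocked within one unit inside $F^{LP}$ (condition (i)), or the neighboring lattice points in the open directions must be pulled into $F_{\upsilon}^{IP}$ to preserve the validity of any generated cut (condition (ii)).
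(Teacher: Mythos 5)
Your case split matches the shape of the statement, but the contradiction step in the second case has a genuine gap, and it is not the route the paper takes. Your argument requires a valid inequality for $C(F_{\upsilon}^{IP})$ that cuts off $y^{p'}$, and hyperplane separation delivers one only if $y^{p'} \notin C(F_{\upsilon}^{IP})$. Your hypothesis, however, is merely $y^{p'} \notin F_{\upsilon}^{IP}$, i.e., non-membership in the finite \emph{set}, which does not imply non-membership in its convex hull. The ``geometric lemma'' you defer is in fact false as stated: take $F_{\upsilon}^{IP}$ containing $y^p$ and $y^p + 2e_i$ but not $y^{p'} = y^p + e_i$; then $y^{p'}$ lies on the segment between two retained points, every inequality valid for $C(F_{\upsilon}^{IP})$ is satisfied by $y^{p'}$, and no cut can be ``slid into the unit-width gap.'' In that configuration the sufficiency guaranteed by Lemma \ref{lemma1} is not violated, your contradiction evaporates, and the corollary's literal membership claim $y^{p'} \in F_{\upsilon}^{IP}$ remains unproved. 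Assumption (A5) does not rescue this: it asserts full dimensionality of $C(F^{IP})$, not of $C(F_{\upsilon}^{IP})$, and in any case it does not prevent $y^{p'}$ from being a convex combination of retained points. (A minor slip besides: you separate along the $j$-th axis, writing $y_j^{p'} - y_j^p \geq 1$, whereas the statement requires $y_i^{p'} - y_i^p \geq 1$; since $d_{ij}^{p}$ in \eqref{dval} measures slack along the $j$ axis the confusion is understandable, but it should be fixed.)

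The paper does not argue by separation at all. Its justification is direct and essentially definitional: when $d_{ij}^{p} < 1$ there is no integer point of $F^{IP}$ between $y^p$ and the boundary of $F^{LP}$ in that direction, so nothing is missing; when $d_{ij}^{p} \geq 1$ there exists an integer point $y^{p'} \in F^{IP}$ strictly between $y^p$ and the boundary with $d_{ij}^{p} > d_{ij}^{p'}$, and since distinct integer coordinates differ by at least one unit, $y_i^{p'} - y_i^{p} \geq 1$. Membership of such points in $F_{\upsilon}^{IP}$ is then not deduced but imposed: the paper states immediately afterwards that the corollary ``simply state[s] the properties for the integer points that define the set $F_{\upsilon}^{IP}$,'' and the ISG algorithm enforces (ii) constructively by lowering the bounds $\bar{y}$ until all such points are captured. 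If you want to salvage your route, the conclusion it can rigorously support is the weaker statement $y^{p'} \in C(F_{\upsilon}^{IP})$ --- hull membership rather than set membership --- which is what validity of the generated cut actually requires, but which is not what the corollary asserts.
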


\noindent When $d_{ij}^{p} < 1$, then there does not exist an integer point$\,\,\,\,  y_i^{p} \in F^{IP}$. Alternatively, when $d_{ij}^{p}  \geq 1$, then there exists an integer point   $ \,\,\, y_i^{p'} \in F^{IP}$ such that $d_{ij}^{p}  > d_{ij}^{p'} $. This means that there is an integer point $y_i^{p'}$ between $y_i^{p}$ and the boundary of $F^{LP}$. Since $y_i^{p} < y_i^{p'}$ and $y_i^{p}, y_i^{p'} > 0$, it implies that $d_{ij}^{p} - y_i^{p} > d_{ij}^{p} - y_i^{p'}$ and $y_i^{p'} - y_i^{p} > 0 $. This means that $y_i^{p'} - y_i^{p} \geq 1 $ since $y_i^{p'},y_i^{p} \in F^{IP}$. However by \textit{(ii)}, if $d_{ij}^{p} \geq 1$ and $y_i^{p'} -  y_i^{p} \geq 1, \forall i \in I$, then the point $y^{p'} \in F_{\upsilon}^{IP}$.

In Corollary \ref{corr1}, we simply state the properties for the integer points that define the set $F_{\upsilon}^{IP}$. However, it is desirable to get the smallest possible set $F_{\upsilon}^{IP}$ such that the valid inequality generated based on $F_{\upsilon}^{IP}$ does not cut off any (optimal) integer point in $F^{IP}$. We evaluate each of the components of $y$ and add an integer point $y^p$ to the set $F_{\upsilon}^{IP}$ if it is the closest integer point to $\hat{y}$ for that component, or if all other integer points between $y^p$ and the boundary of $F^{LP}$ are already in the set $F_{\upsilon}^{IP}$.

In the following lemma, we state the requirements for the minimum cardinality for the set $F_{\upsilon}^{IP}$. Ideally, we would like the set $F_{\upsilon}^{IP}$ to have a small number of integer points since generating a cut based on $C(F^{IP})$ may be computationally expensive.
\\

\begin{lem}\label{lemma2}
	Given $F_{\upsilon}^{IP}$ and the set $D_i = \Argmin_{y^p} \ \lbrace f_i(y^p) \mid y^p \in F^{IP}, y_i^p > 0 \rbrace$ there exists an integer point $y' \in F_{\upsilon}^{IP}$ for every $i \in I$ such that
	
	\begin{equation}\label{p2eq}
	\begin{alignedat}{2}
	y' = \argmin_{y^k} \,\,\, \lbrace f'_i(y^k) \mid y^k \in D_i \rbrace. \nonumber
	\end{alignedat}
	\end{equation}
\end{lem}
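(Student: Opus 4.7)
The plan is to fix an arbitrary $i \in I$, verify that the point $y'$ in the lemma is well defined, and then argue that omitting $y'$ from $F_\upsilon^{IP}$ would contradict the construction rule captured by Corollary \ref{corr1}. The full conclusion then follows by ranging over $i \in I$.

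For well-definedness, by assumptions (A3)--(A4) the set $F^{IP}$ is nonempty and bounded, hence finite, so $f_i$ attains its minimum over $\{y^p \in F^{IP}: y_i^p > 0\}$ (which we may assume nonempty; otherwise the claim is vacuous for this $i$). Thus $D_i$ is nonempty and the nested argmin $y' = \argmin_{y^k}\{f'_i(y^k): y^k \in D_i\}$ is well defined after breaking ties arbitrarily. Clearly $y' \in F^{IP}$, so the only nontrivial claim is membership in $F_\upsilon^{IP}$.

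To show $y' \in F_\upsilon^{IP}$, I would proceed by contradiction. Suppose $y' \notin F_\upsilon^{IP}$. By the double minimality defining $y'$, no integer point of $F^{IP}$ with positive $i$-th coordinate is strictly closer to $\partial F^{LP}$ along any off-$i$ axis than $y'$, and among those tied with $y'$ none is strictly closer along the $i$-th axis. Applying Corollary \ref{corr1}, any $y^p \in F_\upsilon^{IP}$ either satisfies $d_{ij}^p < 1$ for all $j \neq i$ (case (i)), or every $y^{p'} \in F^{IP}$ sitting strictly closer to the boundary in some coordinate must also belong to $F_\upsilon^{IP}$ (case (ii)). Combined with the construction recipe articulated just before the lemma, this characterization forces inclusion of any integer point whose ``strictly closer'' witness set is empty. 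Since $y'$ is exactly such a point, we must have $y' \in F_\upsilon^{IP}$, contradicting the supposition.

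The main obstacle is bridging the gap between the \emph{necessary} characterization of $F_\upsilon^{IP}$ in Corollary \ref{corr1} and the \emph{sufficient} inclusion claim needed here. I expect to address this by first promoting the informal rule ``add $y^p$ if all integer points strictly between $y^p$ and $\partial F^{LP}$ along some axis are already in the set'' to a precise minimality-based inclusion criterion, and then verifying that $y'$, by virtue of its nested-minimum status over $F^{IP}$ with respect to $f_i$ and $f'_i$, has no strictly closer integer predecessor along axis $i$, so the rule fires vacuously and forces $y' \in F_\upsilon^{IP}$. Once this equivalence is pinned down, the lemma follows immediately by quantifying over $i \in I$.
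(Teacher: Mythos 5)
Your attempt has a genuine gap at its central step, and you in fact flagged it yourself: you cannot get from Corollary \ref{corr1} to the inclusion $y' \in F_{\upsilon}^{IP}$. Corollary \ref{corr1} states only \emph{necessary} properties of points that are already members of $F_{\upsilon}^{IP}$, and its clause \textit{(ii)} is a conditional closure rule: \emph{if} some $y^p$ is already in $F_{\upsilon}^{IP}$ and $d_{ij}^{p} \geq 1$, \emph{then} certain further-out points must also be included. Such a rule propagates membership from an existing member toward the boundary of $F^{LP}$, but it can never supply the first member along axis $i$. In particular, a candidate set $F_{\upsilon}^{IP}$ that simply omits $y'$ (indeed, one containing no point with $y_i^p > 0$ at all) satisfies Corollary \ref{corr1} vacuously, so your claimed contradiction never materializes. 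The sentence ``this characterization forces inclusion of any integer point whose strictly closer witness set is empty'' is precisely the unproved step: a rule whose hypothesis is vacuous forces nothing, and the ``promotion'' of the informal construction recipe to a sufficient inclusion criterion is exactly the content of the lemma itself, so invoking it is circular.

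The paper closes this gap by a different, non-combinatorial device that your proposal never touches: assumption (A5). Its argument is that if no such $y'$ existed for some axis $i$, then any valid inequality generated from $F_{\upsilon}^{IP}$ would pass through the origin, which would force $C(F^{IP})$ to fail to be full dimensional, contradicting (A5). In other words, the base-case existence you need is extracted from the geometry of the full-dimensional polytope, not from the closure property of Corollary \ref{corr1}. To repair your proof you would need to import this (or an equivalent dimensional argument): show that a set $F_{\upsilon}^{IP}$ lacking the double minimizer $y'$ along axis $i$ cannot support a facet-defining valid inequality for the full-dimensional $C(F^{IP})$. One point in your favor: your well-definedness discussion (finiteness of $F^{IP}$ from (A3)--(A4), hence attainment of the nested minima defining $D_i$ and $y'$) is careful and is something the paper's own terse proof omits; but it does not substitute for the missing existence argument.
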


\begin{proof}
	We prove this result by contradiction. Suppose that the point $y' \in F_{\upsilon}^{IP} $ does not exist. Then this means that a valid inequality would pass through the origin. This implies that $\, \, C(F^{IP})$ is not full dimensional, which is a contradiction due to  assumption (A5). Hence, there exists a point $y'$ for each axis $i$.
\end{proof}

\begin{prop}\label{prop1}
	The minimum cardinality of the set $F_{\upsilon}^{IP}$ is $n_2$.
\end{prop}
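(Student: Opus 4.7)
The plan is to prove the proposition in two pieces: a lower bound $|F_{\upsilon}^{IP}| \geq n_2$ and achievability by exhibiting a set of exactly $n_2$ integer points that can serve as $F_{\upsilon}^{IP}$.

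First I would establish the lower bound by invoking assumption (A5). Since $C(F^{IP})$ is full-dimensional in $\mathbb{R}^{n_2}$, any valid inequality that is tight at a facet of $C(F^{IP})$ must be supported by $n_2$ affinely independent integer points. This observation already appears inside the proof of Lemma \ref{lemma1}, so I would simply reiterate it: a generated cut $\pi^\top y \leq \pi_0$ that is to serve as a facet-defining (or at least a support) inequality for $C(F^{IP})$ requires the active set of points from $F_{\upsilon}^{IP}$ to span an affine subspace of dimension $n_2 - 1$, which in turn requires at least $n_2$ points in $F_{\upsilon}^{IP}$. Thus $|F_{\upsilon}^{IP}| \geq n_2$.

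Next I would show the bound is tight. By Lemma \ref{lemma2}, for each axis $i \in I$ there exists an integer point $y^{(i)} \in F_{\upsilon}^{IP}$ obtained as $\argmin_{y^k}\{f'_i(y^k) \mid y^k \in D_i\}$. Collect these points as $F_{\upsilon}^{IP} := \{y^{(1)}, \ldots, y^{(n_2)}\}$; this gives a candidate set of cardinality $n_2$. Since each $y^{(i)}$ is selected as the integer point closest to the boundary of $F^{LP}$ along its own coordinate direction, the $n_2$ points naturally correspond to the $n_2$ distinct coordinate directions, and I would argue they are affinely independent by construction (each $y^{(i)}$ lies near a different facet of $F^{LP}$, so the vectors $y^{(i)} - y^{(1)}$ for $i = 2, \ldots, n_2$ span an $(n_2-1)$-dimensional subspace). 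Combined with the lower bound, this yields $|F_{\upsilon}^{IP}| = n_2$ as the minimum.

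The main obstacle I anticipate is the affine independence argument in the achievability step. Lemma \ref{lemma2} only guarantees existence of a per-axis point, not that the collection is affinely independent; one could imagine degenerate instances where the chosen per-axis points happen to collapse onto a lower-dimensional affine subspace. To handle this I would either (i) appeal to assumption (A5) together with the coordinate-wise selection rule to argue that if the points were affinely dependent then $C(F^{IP})$ would be supported by a hyperplane through the origin, contradicting full-dimensionality (mirroring the contradiction device used in Lemma \ref{lemma2}'s proof), or (ii) allow the set $F_{\upsilon}^{IP}$ to be replaced by an affinely independent subcollection of the same size, which may require augmenting with the next-closest integer point per axis. Either route reduces the obstacle to a dimension-counting argument that leans on (A5), which is the only substantive nontrivial step.
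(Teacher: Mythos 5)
Your lower-bound half is exactly the paper's proof: the paper argues via Lemma \ref{lemma2} that every axis $i \in I$ must contribute an integer point (with $\vert I \vert = n_2$), and via assumption (A5) that generating a facet of the full-dimensional polytope $C(F^{IP})$ requires at least $n_2$ affinely independent points, hence $\vert F_{\upsilon}^{IP} \vert \geq n_2$. The paper stops there --- it reads ``minimum cardinality'' as this lower bound and never exhibits a sufficient set of exactly $n_2$ points, so your achievability step is an addition, not a reproduction of anything in the paper.

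That addition, however, contains a genuine gap that your own hedging does not repair. Lemma \ref{lemma2} guarantees for each axis $i$ \emph{some} integer point $y' \in F_{\upsilon}^{IP}$, but nothing prevents the same point from being selected for several axes, so your collection $\lbrace y^{(1)}, \ldots, y^{(n_2)} \rbrace$ may have fewer than $n_2$ distinct elements; and even when the points are distinct, coordinate-wise closeness to the boundary of $F^{LP}$ does not yield affine independence (all per-axis minimizers can lie on a common lower-dimensional face of $C(F^{IP})$). Your proposed fix (i) does not follow: affine dependence of the $n_2$ \emph{chosen} points says nothing about a valid inequality passing through the origin --- that contradiction device in Lemma \ref{lemma2}'s proof establishes existence of a per-axis point, and (A5) constrains $C(F^{IP})$ itself, not an arbitrary subcollection of its integer points. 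Your fix (ii), augmenting with next-closest points per axis, increases the cardinality beyond $n_2$ and therefore cannot certify that $n_2$ is attained. To be fair, tightness is also unproven in the paper (its two-sentence proof is only the lower bound, with the equality $\vert F_{\upsilon}^{IP} \vert = n_2$ asserted afterwards on the grounds of full dimensionality), so you have not missed an argument the authors supply; but as a self-contained proof that the minimum cardinality \emph{equals} $n_2$, your proposal is incomplete precisely at the step you flagged.
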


\begin{proof}	
	By Lemma \ref{lemma2}, there should be at least one integer point for every component $i \in I$. This implies that $\left\vert I \right\vert=n_2$. Also, since valid inequalities are facets, then we need at least $n_2$ affinely independent points in the set $F_{\upsilon}^{IP}$ for generating the facet.
\end{proof}

\subsection{Integer Set Generation Algorithm}

Based on Lemma \ref{lemma2} and Proposition \ref{prop1}, $\left\vert{F_{\upsilon}^{IP}}\right\vert = n_2$ since subproblem \eqref{eqn2a} is full dimensional. However, getting the smallest set is not trivial unless we have an oracle providing an ideal interior point $y^p \in F^{IP}$, on which the smallest set $F_{\upsilon}^{IP}$ can be constructed. In the next section, we devise an algorithm to obtain the set $F_{\upsilon}^{IP}$ using Corollary \ref{corr1}, Lemma \ref{lemma2}, and Proposition \ref{prop1}. In the algorithm, we start with an initial point $y^p \in F_{\upsilon}^{IP}$, where $y^p$ is constructed based on $\hat{y}$. Corollary \ref{corr1} and Lemma \ref{lemma2} are used to check whether the set of points in $F_{\upsilon}^{IP}$ are sufficient for generating a valid inequality, if not then the set $F_{\upsilon}^{IP}$ is expanded by sequentially adding integer points from the set $F^{IP}$. An algorithm for obtaining the set $F_{\upsilon}^{IP}$ can be stated as follows:

\begin{algorithm}[H]
	\caption{Integer Set Generation (ISG) Procedure }
	\label{alg:ISG}
	\baselineskip 0.5 cm
	\small
	\begin{algorithmic}
		\vspace{.1cm}
		\STATE \textbf{Step [1] Initialize:} Let $\bar{y}$ be the lower bound of the variables in \eqref{eqn2a}, and initialized as $\bar{y}_i =\lfloor \hat{y_i}\rfloor,  \, \, \forall i =1 \ldots n_2$. Let $K' \subseteq K$ be the subset of indices for the binding constraints at current solution $\hat{y}$. Let $I$ be the set of variable indices, and $K$ be the set of constraint indices for \eqref{eqn2a}.
		\STATE \textbf{Step [2] Compute Distance $d_{ij}$:}
		\FOR {$i \in I$}
		\FOR {$j \in I\setminus i$}
		\STATE $d_{ij} = 0, f_k = 0$
		\FOR {$k \in K'$}
		\STATE (a) \textit{Assign righthand side for constraint index k:}
		\STATE $f_k \leftarrow \tau_k$
		\STATE (b) \textit{Projections for all other indices except indices i and j:}
		\FOR {$t \in I\setminus \{i,j\}$}
		\STATE $f_k \leftarrow f_k - w_{kt} \hat{y}_t$
		\ENDFOR	\\   	
		\STATE (c) \textit{Compute Distance:}
		\STATE $r_k \leftarrow f_k - w_{ki} \bar{y}_i$
		\STATE $d_{ij} \leftarrow \min \left\lbrace r_k /w_{kj},u_j\right\rbrace $
		\ENDFOR	\\   	
		\ENDFOR	\\   	
		\ENDFOR	\\   	
	\end{algorithmic}
\end{algorithm}

\begin{algorithm}[H]
	\small
	\begin{algorithmic}
		\STATE \textbf{Step [3] Evaluate Bounds:}
		\FORALL {$i \in I$}
		\FORALL {$j \in I\setminus i$}
		\FORALL {$k \in K'$}
		\STATE $z= 0$				
		\REPEAT
		\STATE $\alpha = 0$
		\STATE (d) \textit{Evaluate $d_{ij}$:}
		\IF {$(d_{ij} < 1) \, \, \& \, \, \bar{y}_i \geq 1$}
		\STATE $\bar{y}_i \leftarrow \bar{y}_i -1$; $\alpha = 1$;
		\ELSIF {$d_{ij} - \bar{y}_j < 1 \, \, \& \, \,\bar{y}_j \geq 1$}
		\STATE $\bar{y}_j \leftarrow \bar{y}_j -1$; $\alpha = 1$;
		\ENDIF
		\STATE (e) \textit{Check for Integer Points:}
		\STATE $b \leftarrow 1$
		\WHILE {$\bar{y}_i - b > 0  \, \, \& \, \, \alpha = 0 $}
		\STATE $r^{(1)}_k \leftarrow f_k - w_{ki} (\bar{y}_i-b)$; $d^{(1)}_{ij} \leftarrow r^{(1)}_k /w_{kj}$;
		\IF {$(\lfloor d^{(1)}_{ij} \rfloor - \lfloor d_{ij} \rfloor \geq 1) \, \, \& \, \, (\lfloor d^{(1)}_{ij} \rfloor \leq u_i) \, \,$}
		\STATE $\bar{y}_i \leftarrow \bar{y}_i -b$; $\alpha = 1$;
		\ENDIF \\
		$ b \leftarrow b + 1$;
		\ENDWHILE
		\IF {$\alpha = 1$}		
		\STATE Re-evaluate $d_{ij}$ using the Step [2c];
		\ENDIF   		
		\STATE $z \leftarrow z + 1$;						
		\UNTIL{$\alpha = 0$}   		
		\ENDFOR	\\   	
		\ENDFOR	\\   	
		\ENDFOR	\\   	
		\STATE \textbf{Step [4] Use the computed lower bound} $\bar{y}_i$ for the variable index $i \in I$ in FCG.
	\end{algorithmic}
\end{algorithm}

In Algorithm \ref{alg:ISG}, we initialize $\bar{y}_i =\lfloor \hat{y}_i\rfloor$ to obtain the set $F_{\upsilon}^{IP}$. In each iteration, we evaluate two components of $y$ along $i$ and $j$, and the lower bounds for the components are decreased based on the  distance of the components from the binding constraints. Each pair of components, $y_i$ and $y_j$ are evaluated in two-dimensional (2D) space. The pair of components $(y_i,y_j)$ are evaluated in the 2D space with the criterion that $\bar{y_i}$ and $\bar{y_j}$ provide at least one integer point for the generation of the valid inequality in $i^{th}$ direction. We also make sure that $\bar{y_i}$ and $\bar{y_j}$ does not remove any integer point in $C(F^{IP})$ so that the generated valid inequality does not cut off any optimal solution.

In Step [1], we initialize the parameters using LP-relaxation solution for subproblem \eqref{eqn2a} given as $\hat{y}_i$ for $i \in I$, where $I$ is the set of decision variable indices. Furthermore, $\bar{y}_i$ is the parameter of the algorithm which we intend to use as lower bound for the $i^{th}$ component in the subproblem \eqref{eqn2a}. We would like to \textit{increase} the value of $\bar{y}_i$ without cutting off any integer solution for the original problem. Initially $\bar{y}_i$ is set to $\lfloor \hat{y}_i\rfloor$. In Step [2a], the right hand side of binding constraint $k \in K'$ is assigned to parameter $f_k$. In Step [2b], for any $i,j \in I,$ such that $i \neq j$, we calculate the distance $d_{ij}$ in $(i,j)$ space using equation (\ref{dval}). We then evaluate $d_{ij}$ in  Step [2c]. The parameter $d_{ij}$ is the measure of distance from the other component's axis to the binding constraint. If $d_{ij} < 1$, then it indicates the absence of an integer point along $i^{th}$ axis, then $\bar{y_i}$ is decreased by one, and $\bar{y}$ will be evaluated again for the binding constraint $k$ using the expanded set  $F_{\upsilon}^{IP}$. Hence, we start with a smallest set of integers based on $ \lfloor \hat{y}_i\rfloor$, and as the algorithm progresses, the set $F_{\upsilon}^{IP}$ is expanded by decreasing $\bar{y}$ based on Corollary \ref{corr1} and Lemma \ref{lemma2}.

\begin{figure}[H]
	\centering
	\includegraphics[scale=1.00]{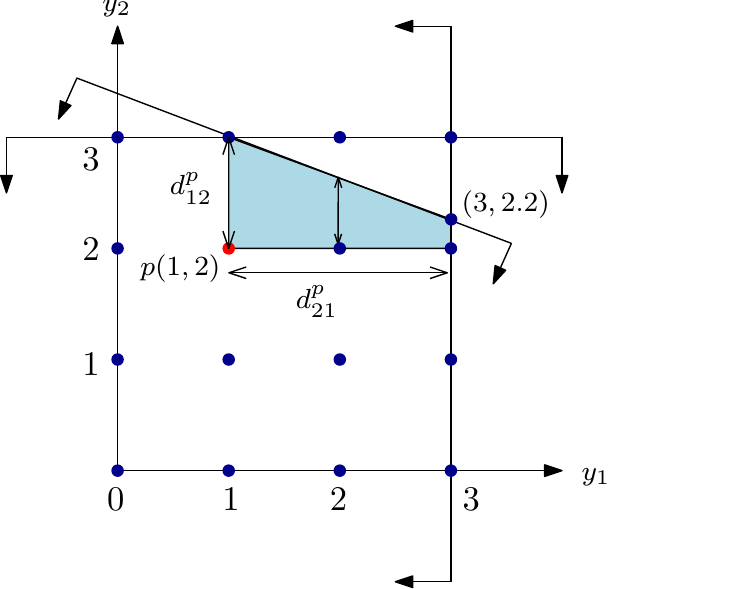}\\
	\caption{Illustration of ISG algorithm}
	\label{brp-fig1-sp}
\end{figure}

In step [3d], we check property \textit{(ii)} of Corollary \ref{corr1}. If there are any integer points along the component $i$, then $\bar{y}_i$ is reduced to accommodate additional integer points into the set $F_{\upsilon}^{IP}$. We make sure that the reduced set $C(F_{\upsilon}^{IP})$ is sufficient to get the required valid inequality. The complexity of the algorithm is $\BigO{n^3m}$, where $n$ is the number of variables, and $m$ is the number of constraints. The 2D construction from the entire polytope is similar to variable elimination method described in \cite{dantzig1973fourier}.

\subsection{Numerical Examples}
In this section, we demonstrate the ISG algorithm using numerical examples. In Example 1, we use an IP subproblem with two decision variables and one constraint to illustrate the generation of a reduced integer set using the ISG algorithm. Example 2 demonstrates the algorithm for a subproblem with two decision variables and two constraints. In both Example 1 and 2, Step[3e] of the ISG algorithm is not required, therefore we use Example 3 to demonstrate the significance of this step.\\

\noindent \textbf{Example 1:}
Consider the following IP subproblem:
\begin{equation}
\begin{alignedat}{2}\label{eq-ip1}
\text{IP1: } \Max   & y_1 + y_2 && \\
\text{s.t. }        & 0.4 y_1 + y_ 2 \leq 3.4                && \\
& 0 \leq y_1,y_2 \leq 3                 && \\
& y_1,y_2 \in \mathbb{Z},                &&
\end{alignedat}
\end{equation}

\noindent The LP-relaxation to the problem (\ref{eq-ip1}) has the optimal solution (3, 2.2). Thus, a valid inequality has to cut off the fractional solution.
The steps of Algorithm 1 are as follows:\\
\begin{itemize}
	\item[] Step [1] Initialize: $y^p =(\lfloor 3 \rfloor,\lfloor 2.2 \rfloor) = (3, 2)$. Therefore, ${y}_1^{p} = 3$, ${y}_2^{p} = 2$, $\bar{y}_1 = 3$, $\bar{y}_2 = 2,$ based on $\hat{{y}_1} = 3$ and $\hat{{y}_2} = 2.2$.
	\item[] Step [2] Compute Distance $d_{12}^p$: $i=1, j=2 , k=1$, \newline
	$r_1 = 3.4 - 0.4(3) = 2.2$. $d_{12}^p = \min \left\lbrace 2.2/1 - y_2^p, 3 - y_2^p \right\rbrace = 0.2 $.
	\item[] Step [3] Evaluate Bounds: \newline
	$z=0, i=1:$\newline
	$d_{12}^p < 1$ $\Rightarrow \bar{y}_1 = \bar{y}_1-1 = 3-1=2,$ update $y^p \Leftarrow (2,2)$, and $\alpha= 1$. \newline
	Since $\alpha= 1$, Re-evaluate $d_{12}^p$: $r_1 = 3.4 - 0.4(2) = 2.6$. $d_{12}^p = \min \left\lbrace 2.6/1 - y_2^p, 3 - y_2^p \right\rbrace = 0.6 $. \newline
	$z=1,i=1:$\newline	
	$d_{12}^p < 1$ $\Rightarrow \bar{y}_1 = \bar{y}_1-1 = 2-1=1,$ update $y^p \Leftarrow (1,2)$, and $\alpha= 1$. \newline	
	Since $\alpha= 1$, Re-evaluate $d_{12}^p$: $r_1 = 3.4 - 0.4(1) = 3.0$. $d_{12}^p = \min \left\lbrace 3.0/1 - y_2^p, 3 - y_2^p \right\rbrace = 1.0 $. \newline
	$z=2,i=1:$\newline
	$d_{12}^p \geq 1$, then evaluate the next component $\bar{y}_2$.\newline	 
	$\bar{y}_2 = 2$, $r_1 = 3.4 - 2 = 1.4$. \newline
	$d_{21}^p = \min \left\lbrace 1.4/0.4 - y_1^p, 3 - y_1^p \right\rbrace = \min \left\lbrace 3.5 - y_1^p, 3 - y_1^p \right\rbrace = 2$. \newline 	
	$z=0,i=2:$\newline		
	Since $d_{21}^p \geq 1$, we do not make any changes to $\bar{y}_2 $.
	\item[] Step [4] New Lower Bounds: $y_1=1$ and $y_2=2$.				
\end{itemize}

The value $d_{12}^p=0.2$ represents the distance between the point $y^p(3, 2)$ and the point $y^p(3, 2.2)$ for a binding constraint in the $y_2$ direction. Since there is no integer point in the direction, the algorithm iterates to reach the point $y^{p'}(2, 2)$, where the distance $d_{12}^{p'} = \min \left\lbrace 2.6/1 - y_2^{p'}, 3 - y_2^{p'} \right\rbrace = 0.6$. The value `$0.6$' is the distance between the point $y^{p'}(2,2)$ and the binding constraint in $y_2^{p'}$ direction. Since $d_{12}^{p'} < 1$, the algorithm is continued to next iteration. The value $d_{12}^{p''} = \min \left\lbrace 3/1 - y_2^{p''}, 3 - y_2^{p''} \right\rbrace = 1$ is the distance between the point $y^{p''}(1,2)$ and the binding constraint at the point $y^{p''}(1,2)$.

The feasible set based on the new point $(1,2)$ as the origin is depicted in Figure \ref{brp-fig1}-(a). The reduced feasible set is now used for the generation of a cut, which is depicted in Figure \ref{brp-fig1}-(b).

\begin{figure}[H]
	\centering
	\includegraphics{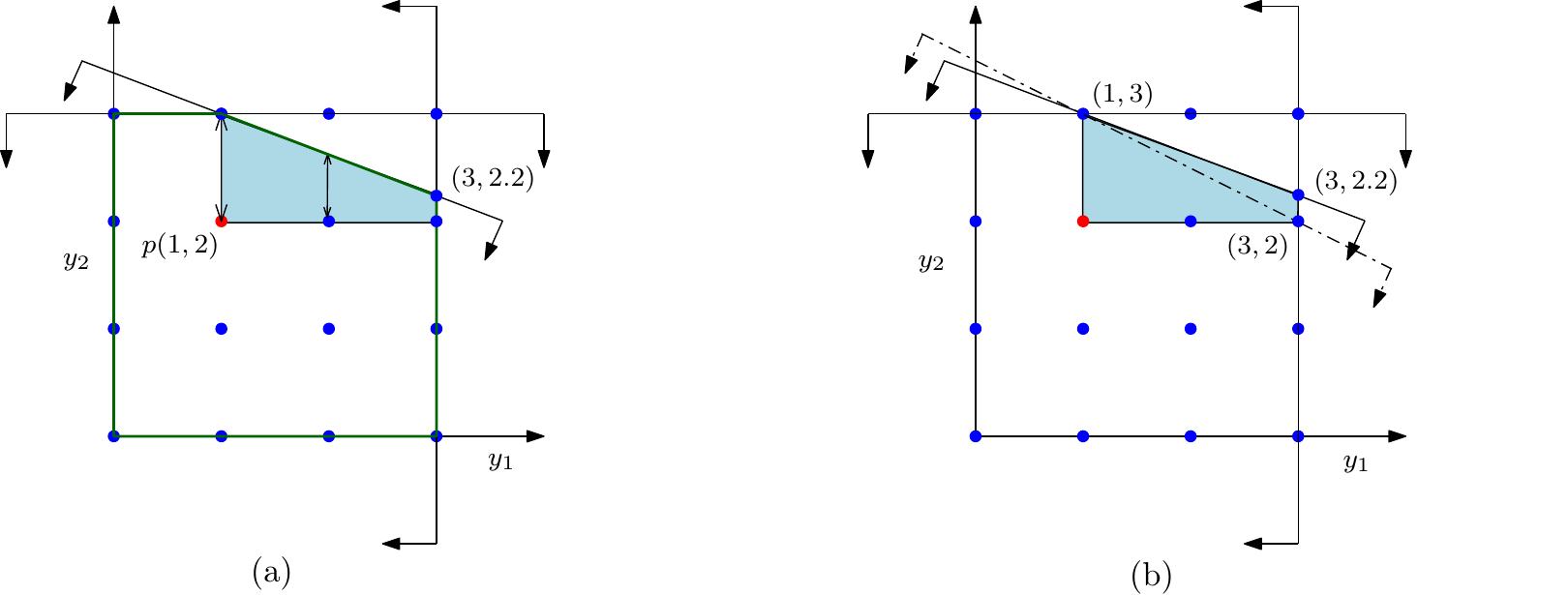}\\
	\caption{Example 1 illustration: (a) reduced integer set and (b) cut generated based on the reduced set}
	\label{brp-fig1}
\end{figure}

\noindent \textbf{Example 2:}
Consider the following IP subproblem:
\begin{subequations}\label{eq-ip2}
\begin{alignat}{3}
\text{IP2: } \Max   & y_1 + y_2 && \\ \label{eq-ip2a1}
\text{s.t. }        & 0.4 y_1 + y_ 2 \leq 3.4                && \\ \label{eq-ip2a2}
			        &  y_1 + 0.4 y_ 2 \leq 3.4                && \\ \label{eq-ip2a3}
& 0 \leq y_1,y_2 \leq 3                 && \\ \label{eq-ip2a4}
& y_1,y_2 \in \mathbb{Z},                &&
\end{alignat}
\end{subequations}

\noindent The LP-relaxation to the problem (\ref{eq-ip2}) has the optimal solution (2.42, 2.42). Thus, a valid inequality has to cut off the fractional solution.
The steps of Algorithm 1 are as follows:\\
\begin{itemize}
	\item[] Step [1] Initialize: $y^p =(\lfloor 2.42 \rfloor,\lfloor 2.42 \rfloor) = (2, 2)$. Therefore, ${y}_1^{p} = 2$, ${y}_2^{p} = 2$, $\bar{y}_1 = 2$, $\bar{y}_2 = 2,$ based on $\hat{y}_1 = 2.42$ and $\hat{{y}_2} = 2.42$.
	\item[] Step [2] Compute Distance : \newline
	$i=1, j=2 , k=1$, \newline
	$r_1 = 3.4 - 0.4(2) = 2.6$. $d_{12}^p = \min \left\lbrace 2.6/1 - y_2^p, 3 - y_2^p \right\rbrace = 0.6 $. \newline
	\item[] Step [3] Evaluate Bounds : \newline
	$z=0, i=1, k=1:$\newline	
	$d_{12}^p < 1$ $\Rightarrow \bar{y}_1 = \bar{y}_1-1 = 2-1=1,$ update $y^p \Leftarrow (1,2)$, and $\alpha= 1$. \newline
	Since $\alpha= 1$, Re-evaluate $d_{12}^p$: $r_1 = 3.4 - 0.4(1) = 3.0$. $d_{12}^p = \min \left\lbrace 3.0/1 - y_2^p, 3 - y_2^p \right\rbrace = 1.0 $. \newline
	$z=1, i=1, k=1:$\newline		
	$d_{12}^p \geq 1$, then evaluate the next constraint.\newline	
	$i=1, j=2 , k=2$, \newline
	$r_2 = 3.4 - 1 = 2.4$. $d_{12}^p = \min \left\lbrace 2.4/0.4 - y_2^p, 3 - y_2^p \right\rbrace = 1.0 $. \newline
	$z=0, i=1, k=2:$\newline		
	$d_{12}^p \geq 1$, and all the constraints are evaluated, so we move to the next component $\bar{y}_2$.\newline	
	$z=0,i=2, j=1 , k=1$, \newline		
	Similarly, for $\bar{y}_2 = 2$, $r_1 = 3.4 - 2 = 1.4$. \newline
	$d_{21}^p = \min \left\lbrace 1.4/0.4 - y_1^p, 3 - y_1^p \right\rbrace = \min \left\lbrace 3.5 - y_1^p, 3 - y_1^p \right\rbrace = 2$. \newline 	
	$d_{21}^p \geq 1$, then evaluate the next constraint.\newline	
	$z=0, i=2, j=1 , k=2$, \newline
	$r_2 = 3.4 - 0.4(2) = 2.6$.\newline	
	$d_{21}^p = \min \left\lbrace 2.6/1.0 - y_1^p, 3 - y_1^p \right\rbrace = \min \left\lbrace 2.6 - y_1^p, 3 - y_1^p \right\rbrace = 1.6$. \newline 	
	Since $d_{21}^p \geq 1$, we don't make any changes to $y_2$.
	\item[] Step [4] New Lower Bounds: $y_1=1$ and $y_2=2$.				
\end{itemize}

The value $d_{12}^p=0.6$ represents the distance between the point $y^p(2, 2)$ and the point $y^p(2, 2.6)$ for the binding constraint \eqref{eq-ip2a1} in $y_2$ component's direction. Since there is no integer point in the direction, the algorithm iterates to reach the point $y^{p'}(1, 2)$, where the distance $d_{12}^{p'} = \min \left\lbrace 3.0/1 - y_2^{p'}, 3 - y_2^{p'} \right\rbrace = 1.0$. The value `$1.0$' is the distance between the point $y^{p'}(1,2)$ and the binding constraint \eqref{eq-ip2a1} in $y_2^{p'}$ direction. Since $d_{12}^{p'} \geq 1$, the algorithm considers the next constraint. Similarly, the other index $y_2$ is evaluated in $y_1$'s directions for both the constraints \eqref{eq-ip2a1} and \eqref{eq-ip2a2}.

The feasible set based on the new origin $(1, 2)$ is depicted in Figure \ref{brp-fig2}-(a). The feasible set is used for the generation of valid inequalities, and the generated cut is shown in Figure \ref{brp-fig2}-(b). After performing to the ISG algorithm, the new origin is (1, 2).

\begin{figure}[H]
	\centering
	\includegraphics{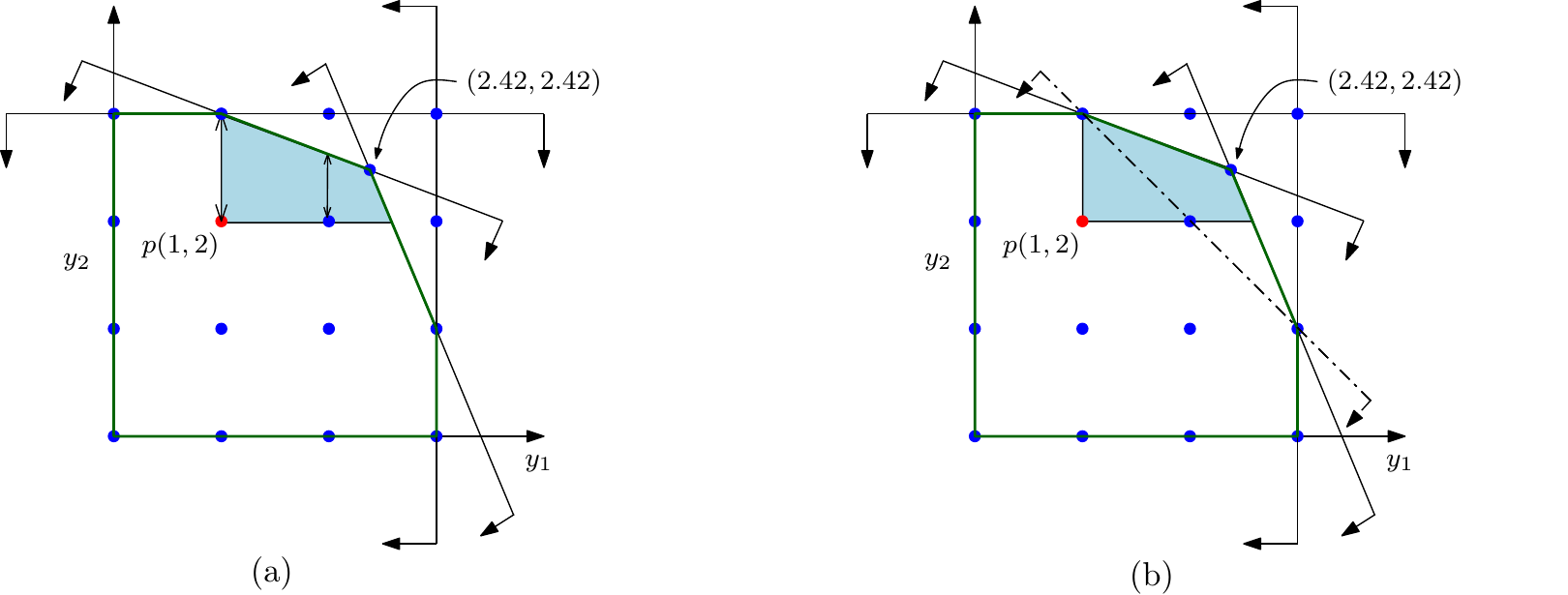}\\
	\caption{Example 2 illustration: (a) reduced integer set and (b) cut generated based on the reduced set}
	\label{brp-fig2}
\end{figure}

\noindent \textbf{Example 3:}
Consider another IP subproblem given as follows::
\begin{equation}
\begin{alignedat}{2}\label{eq-ip3}
\text{IP3: } \Max   & 1.2 y_1 + 3.4 y_2 && \\
\text{s.t. }        & 6 y_1 + 5 y_ 2 \leq 37.4                && \\
& 0 \leq y_1,y_2 \leq 5                 && \\
& y_1,y_2 \in \mathbb{Z},                &&
\end{alignedat}
\end{equation}

The LP-relaxation for problem (\ref{eq-ip3}) gives the solution $(5,1.48)$. Using Step [3d] of the ISG algorithm, the new origin for FCG procedure is shifted to $(4,0)$. However, based on  $F_{\upsilon}^{IP}$, the generated valid inequality removes an integer point $(2,5)$ from the solution space. Hence, we use Step [3e] to prevent any possibility of removing off integer points from the solution space based on the current reference obtained from step [3d]. Thus Step [3e] gives the new reference $(2,0)$ based on the possible integer points in $F^{IP}$. The reduced solution space based on the new origin $(4,0)$ without Step (e) is shown in Figure \ref{brp-fig3}-(a). The reduced solution space based on Step [e] for the generation of a valid inequalities is shown in Figure \ref{brp-fig3}-(b).

\begin{figure}[H]
	\centering
	\includegraphics[scale=0.80]{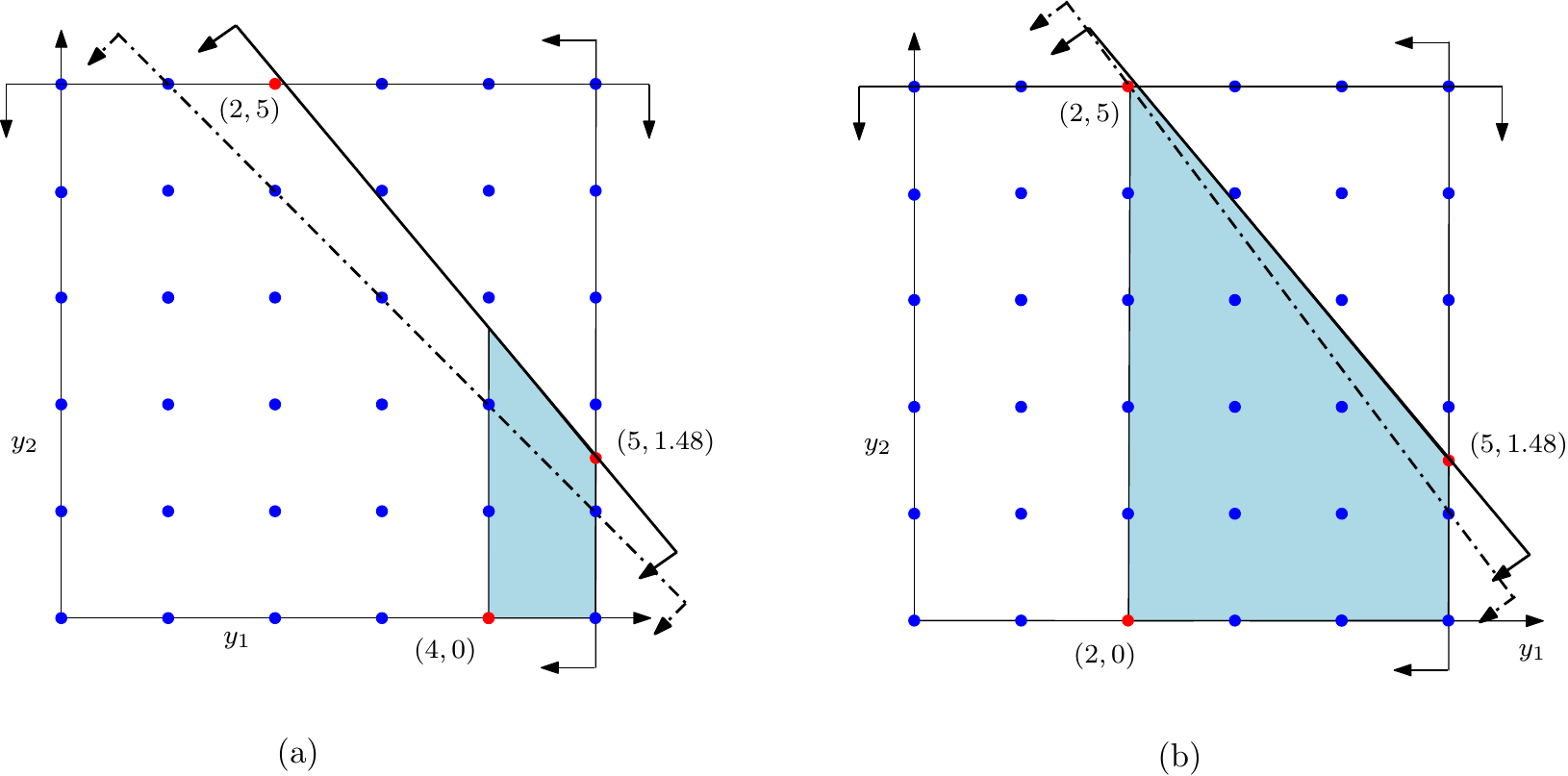}\\
	\caption{Example 3 illustration: (a) reduced integer set and (b) cut generated based on the reduced set}
	\label{brp-fig3}
\end{figure}

The newly computed lower bounds from ISG algorithm will be used for the generation of valid inequalities. The objective of ISG algorithm is to obtain a smaller reduced set $F_{\upsilon}^{IP} \subseteq F^{IP}$ which further gives $C(F_{\upsilon}^{IP}) \subseteq C(F^{IP})$. A reduced set $F_{\upsilon}^{IP}$ is expected to provide better runtime for the generation of a valid inequality.

\section{Fenchel Decomposition Algorithm}\label{sec-alg}
To demonstrate how to use the ISG method within an SMIP algorithm, we will apply this method to stage-wise Fenchel decomposition (SFD) for solving SMIPs \cite{Ntaimo10}. SFD adopts the Benders' decomposition setting with $x$ as the first-stage decision variable in the master problem, and $y(\omega)$ as the second-stage decision variable in the subproblem. In SIP2, instead of working with the IP subproblem directly, SFD seeks to find the optimal solution via a cutting plane approach on a partial LP-relaxation of SIP2 where only the subproblems are relaxed. Fenchel decomposition (FD) cuts are sequentially generated to recover (at least partially) the convex hull of integer points for each scenario subproblem feasible set. If a subproblem LP has a non-integer solution, an FD is generated and added to cut off the fractional solution. FD cuts are capable of recovering faces of the convex hull of integer programs, which is the special structure for SIP2. The goal is to construct the convex hull of integer points in the neighborhood of the optimal solution so that by solving subproblem LPs with sufficient FD cuts added, we can find the optimal solution without having to resort to a branch-and-bound scheme to guarantee optimality.

\subsection{Algorithm}\label{sec-alg}

At a given iteration $k$ of the SFD cutting plane algorithm, the master problem takes the following form: \\
\begin{subequations}\label{eq-master-1}
	\begin{align}
	z^k = \Max      \   & c^\top x + \theta          \notag \\
	\text{s.t. } & Ax \leq b  \notag\\
	& (\eta^{t})^\top x + \theta \geq \gamma^{t} , \ t \in 1,...,k \label{eq-master-1a} \\
	& x \in \{0,1\}.  \notag
	\end{align}
\end{subequations}
In the master problem \eqref{eq-master-1}, $\theta$ is the optimality cut decision variable, $\eta \in \Re^{n1}$ is the optimality cut coefficient vector, and $\gamma \in \Re$ is the right hand side. Constraints \eqref{eq-master-1a} are the \textit{optimality} cuts, which are computed based on the optimal dual solutions of all the LP-relaxation subproblems. Optimality cuts approximate the value function of the second-stage subproblems. For a first-stage solution $x^k$ from the master problem \eqref{eq-master-1}, the subproblem for each scenario $\om \in \Omega$ is given as follows:
\begin{subequations}\label{eq-suSIP2}
	\begin{align}
	\text{SP}(\om):\Phi^k_{LP}(\rho,\tau,\om) = \Max \ & \rho^{\top} y(\om)            \notag\\
	\text{s.t. } & W y(\om) \leq \tau \notag \\
	& \beta^t(\om)^{\top} y(\om) \leq g(\om,\beta^t(\om)), \ t \in \Theta(\om) \label{eq-suSIP2a}\\
	& 0 \leq y(\om) \leq u	\notag \\               	
	& y(\om) \geq 0.               \notag
	\end{align}
\end{subequations}
Constraints \eqref{eq-suSIP2a} are the Fenchel cuts, and $\Theta(\om)$ is the index set for algorithm iterations at which a Fenchel cut is generated for each $\om \in \Omega$. Next, we describe how these cuts are generated.

The SFD algorithm starts by initializing data in Step [1] and getting an initial solution by solving the LP-relaxation of SIP2 in Step [2]. If the initial solution satisfies the integrality restrictions for all subproblems in Step [3], i.e., $x \in X$ and $y(\omega) \in Y, \, \forall \, \om \in \Omega$, then the solution is declared $\epsilon$-optimal, and the algorithm terminates. Otherwise, the algorithm continues by calculating and storing the optimality cut coefficients for all subproblems with an integer solution in Step [4].

\begin{algorithm}[H]
	\caption{Stage-Wise Fenchel Decomposition (SFD) Algorithm}
	\label{alg:FCG}
	\baselineskip 0.5 cm
	\small
	\begin{algorithmic}
		\vspace{.1cm}
		\STATE \textbf{Step [1] Initialization:} set $k \leftarrow 0, \epsilon > 0, LB \leftarrow -\infty$ and $UB \leftarrow \infty$.
		\STATE \textbf{Step [2] Get initial solution:} Solve problem (\ref{eq-master-1}-\ref{eq-suSIP2}) using the L-shaped algorithm to get  solution $(\hat{x}^0,\hat{y}^0(\omega))$, objective function value $\varphi^0 = \sum_{\omega \in \Omega}p_{\omega} \Phi^k_{LP}(\rho,\tau,\om)$, and dual solutions $\hat{\pi}^k(\omega)$ for each $\omega \in \Omega$.\\
		\STATE \textbf{Step [3] Check solution integrality:} \\
		\IF {$\hat{y}^k(\om)$ $\in Y$}
		\STATE Report $(\hat{x}^k,\hat{y}^k(\omega))$ as optimal.\\
		\STATE \textbf{Stop}.
		\ENDIF
		\STATE \textbf{Step [4] Calculate and store optimality cuts coefficients for scenarios with integer solution}
		\FOR {$\om \in \Omega$}
		\IF {$\hat{y}(\omega)^k$ $\in Y$}
		\STATE Calculate and store optimality cut coefficients $\eta(\omega)^k \leftarrow \hat{\pi}(\omega)^{k\top} T(\omega)$ and $\gamma(\omega)^k \leftarrow \hat{\pi}(\omega)^{k\top} h(\omega)$.
		\ENDIF
		\ENDFOR
		\STATE \textbf{Step [5] Fenchel cuts and optimality cuts generation:} \\
		\FOR {$\om \in \Omega$}
		\IF {$\hat{y}(\omega)^k \notin Y$}
		\STATE \textit{Compute scenario Fenchel cut coefficients:} Run ISG to get $F_{\upsilon}^{IP}$ and use FCG  based on $F_{\upsilon}^{IP}$ to get $\beta(\omega)^k$ and $g(\omega,\beta(\omega)^k)$.
		\STATE Add the cut $\beta(\omega)^{k\top} y(\omega) \leq g(\omega,\beta(\omega)^k)$ to subproblem \eqref{eq-suSIP2}.
		\STATE Solve the updated subproblem $\text{SP}(\omega)$ and get updated subproblem dual solution $\hat{\pi}(\omega)^k$.
		\STATE Update optimality cut coefficients $\eta^k \leftarrow \eta^k + p_{\omega} \cdot (\hat{\pi}(\omega)^{k})^\top T(\omega)$ and $\gamma^k \leftarrow \gamma^k + p_{\omega} \cdot (\hat{\pi}(\omega)^{k})^\top h(\omega)$.
		\ENDIF
		\ENDFOR
	\end{algorithmic}
\end{algorithm}

\begin{algorithm}[H]
	\small
	\begin{algorithmic}
		\STATE \textbf{Step [6] Add optimality cut} $\eta^k x + \theta \geq \gamma^k$ to master problem \eqref{eq-master-1} and update iterator: set $k \leftarrow k+1$.
		\STATE \textbf{Step [7] Solve master problem} \eqref{eq-master-1} to get a new first-stage solution $\hat{x}^k$ and objective value $z^k$.
		\STATE \textbf{Step [8] Update lower bound:} Set $LB \leftarrow max\{LB, z^k\}$
		\STATE \textbf{Step [9] $\epsilon$-optimality check:}
		\IF {$|UB-LB| \leq \epsilon |LB|$}
		\STATE Go to Step [14].
		\ENDIF
		\STATE \textbf{Step [10] Solve subproblems:}
		\FOR {$\omega \in \Omega$}
		\STATE Solve subproblem \eqref{eq-suSIP2} to get updated subproblem solution $\hat{y}(\omega)^k$, optimal value $\Phi^k_{LP}(\rho,\tau,\om)$ and dual solution $\hat{\pi}(\omega)^k$.
		\IF {$\hat{y}(\omega)^k$ $\in Y$}
		\STATE Calculate and store optimality cut coefficients $\eta(\omega)^k \leftarrow \hat{\pi}(\omega)^{k\top} T(\omega)$ and $\gamma(\omega)^k \leftarrow \hat{\pi}(\omega)^{k\top} h(\omega)$.
		\ENDIF
		\ENDFOR
		\STATE \textbf{Step [11] Subproblem solutions integrality check:}
		\FOR {$\omega \in \Omega$}
		\IF {$y(\omega)^k \notin Y$}
		\STATE Go to Step [5].
		\ENDIF
		\ENDFOR

		\STATE \textbf{Step [12] Update solution and bound information:}
		\STATE $\,\,\,\,$ Update incumbent solution: $x^* \leftarrow x^k$.
		\STATE $\,\,\,\,$ Update upper bound: $UB \leftarrow min \{UB, c^{\top}{x}^k + \sum_{\omega \in \Omega} p_{\omega} \Phi^k_{LP}(\rho,\tau,\om)  \}$.
		\STATE \textbf{Step [13] $\epsilon$-optimality check:}
		\IF {$|UB-LB| > \epsilon |LB|$}
		\STATE Go to Step [6].
		\ENDIF
		\STATE \textbf{Step [14] Declare} $x^*$ $\epsilon$-optimal.
		\STATE \textbf{Stop.}
	\end{algorithmic}
\end{algorithm}

For subproblems with a solution that does not satisfy the integrality requirements, Fenchel cut coefficients $\beta^k(\omega)$, and the right hand side $g(\omega, \beta^k(\om))$ are computed for the iteration $k$ in Step [5]. A Fenchel cut is added to subproblem $\text{SP}(\omega)$. Next, the dual solution obtained by solving the subproblem is used to generate the optimality cut coefficients. Once all subproblems have been solved at a given iteration, the optimality cut is added to the master problem  in Step [6]. The iteration counter $k$ is incremented by one, and the master problem is solved again in Step [7] to get an updated first-stage solution and objective value.

The lower bound $LB$ is updated in Step [8]. The gap between the lower bound $LB$ and the upper bound $UB$ is verified in Step [9]. If this gap is small enough, then the incumbent solution is declared $\epsilon$-optimal in Step [14], and then the algorithm terminates. Otherwise, all the subproblems are solved again, and optimality cut coefficients are updated for subproblems with an integer solution in Step [10]. The integrality of subproblem solutions is verified in Step [11]: if a subproblem solution for any given $\omega$ is not integral, the algorithm returns to Step [5], to generate and add Fenchel cuts to the subproblems with a non-integer solution, and compute their optimality cut coefficients. Otherwise, the incumbent solution $x^*$ and the upper bound $UB$ are updated in Step [12]. The optimality check is done again in Step [13]: if it is satisfied, the incumbent solution is $\epsilon$-optimal, and the algorithm is terminated. Otherwise, the algorithm returns to Step [6], and the optimality cut is added to the master problem, and its solved again. The algorithm is continued until the termination condition is satisfied.

\subsection{Fenchel Cut Generation with Reduced Integer Set}
For the sake of completeness, we next present the Fenchel cut generation procedure which is based on \cite{ntaimo2013fenchel}. The procedure uses a master problem to construct a linear approximation of the subproblem space while the subproblem returns feasible integer points from $F_{\upsilon}^{IP}$. The master problem is given as,

\begin{equation}\label{eq-cutlpmaster1}
\begin{alignedat}{2}
\delta^{(t)} = && \ \ \Max_{\beta(\omega) \in \Pi^{\beta}} &  \ \   \theta  \\
&& \text{s.t. }  \ \ \ &  -\theta + (\hat{y(\omega)} - y(\omega)^{(\nu)})^\top \beta(\omega)^{(\nu)} \geq 0, \ \nu=1,\cdots,t.
\end{alignedat}
\end{equation}

\noindent where the subproblem at iteration $t$ is solved to get $y(\omega)^{(\nu)}$.
\begin{equation}\label{gee1}
g(\omega,\beta(\omega)) =
\Max_{y(\omega) \in F_{\upsilon}^{IP}}\left\{ \beta(\omega)^{\top} y(\omega) \right\}.
\end{equation}

Solving the problems \eqref{eq-cutlpmaster1} and \eqref{gee1} iteratively will give the optimal $\beta(\omega)$ and $g(\omega,\beta(\omega))$. The algorithm is stated as follows:

\begin{algorithm}[H]
	\caption{Fenchel Cut Generation Procedure (FCG)}
	\label{alg:FCG1}
	\baselineskip 0.5 cm
	\small
	\begin{algorithmic}
		\vspace{.1cm}
		\STATE \textbf{Step [1] Initialization:} Set $t \leftarrow 0, \epsilon > 0, LB \leftarrow -\infty, UB \leftarrow \infty$, and get an initial point $\beta(\omega)^{(0)} \in \Pi^{\beta}$.
		\STATE \textbf{Step [2] Lower Bound:}
		\STATE $\,\,\,\,$ Use $\beta(\omega)^{(t)}$ to solve problem (\ref{gee1}) and get solution $y(\omega)^{(t)}$ and the corresponding objective value $g(\omega,\beta(\omega)^{(t)})$.
		\STATE \textbf{Compute lower bound:}
		\STATE $\,\,\,\,$ Let $d^{(t)} \leftarrow (\hat{y(\omega)}-y(\omega)^{(t)})^\top\beta(\omega)^{(t)}$.
		\STATE $\,\,\,\,$ Set $l^{(t+1)} \leftarrow max\{d^{(t)},l^{(t)}\}$.
		\IF {$l^{(t+1)}$ is updated}
		\STATE \textbf{Update incumbent solution:}
		\STATE $\,\,\,\,$ Set $\mu \leftarrow d^{(t)}$ and $(\beta(\omega)^{*},g(\omega,\beta(\omega)^{*})) \leftarrow (\beta(\omega)^{(t)},g(\omega,\beta(\omega)^{(t)}))$.
		\ENDIF
		\STATE Use $\hat{y(\omega)}$ and solution ${y}(\omega)^{(t)}$ from subproblem (\ref{gee1}) to form and add constraint to the problem (\ref{eq-cutlpmaster1}).
		\STATE \textbf{Step [3] Upper Bound:}
		\STATE $\,\,\,\,$ Solve problem (\ref{eq-cutlpmaster1}) to get an optimal solution
		 $(\theta^{(t)},\beta(\omega)^{(t)})$.
		\STATE \textbf{Compute upper bound:}
		\STATE $\,\,\,\,$ Set $u^{(t+1)} \leftarrow min\{\theta^{(t)}, u^{(t)}\}$.
		\IF {$u^{(t+1)} - l^{(t+1)} \leq \epsilon'$}
		\STATE The incumbent solution is optimal.
		\STATE \textbf{Stop.}
		\ELSE
		\STATE Set $t \leftarrow t+1$ and go to [2].
		\ENDIF
	\end{algorithmic}
\end{algorithm}

In Step [1] we initialize the parameters for the algorithm. Each component of $\beta(\omega)^{(0)}$ is arbitrarily initialzed to a value with their bounds. Since a subproblem integer program \eqref{eqn2a} has to be solved many times to generate Fenchel cuts, a linearly constrained domain for $\Pi^{\beta}$ such as the $L^1$ unit sphere or $L^2$ can also be used. Step [2] uses $\beta(\omega)^{(0)}$ as coefficients, then subproblem (\ref{gee1}) is solved, and the corresponding objective value is stored. Due to ISG, subproblem (\ref{gee1}) can be evaluated using $F_{\upsilon}^{IP}$ instead of $F^{IP}$. It should be noted that subproblem (\ref{gee1}) is solved as an IP, so the solution $y(\omega)^{(t)}$ is integral. The bounds and incumbent solutions are updated in Step [2]. Based on the solution $y(\omega)^{(t)}$ from subproblem (\ref{gee1}), the cut is added to master problem (\ref{eq-cutlpmaster1}). In Step [3], master problem (\ref{eq-cutlpmaster1}) is solved and the termination condition is checked. Based on the termination condition, the algorithm either stops or continues.

\section{Computational Study} \label{sec-lcomptns}
 To gain insights into the benefits of imbedding integer set reduction into a cutting plane algorithm, we performed a computational study based on an implementation of the SFD algorithm with the option of turning on ISG. The algorithm was implemented in C++ using the CPLEX 12.5 Callable Library \cite{CPLEX} in Microsoft Visual Studio 2010. Computations were performed on an ACPI x64 computer with Intel\textregistered Xeon\textregistered Processor E5620 (2.4 GHz) and 12GB RAM. CPLEX MIP and LP solvers were used to optimize the master program and subproblems. Two test sets were created based on the two-stage multidimensional knapsack problem. The SFD algorithm was run to solve the test instances to optimality or stopped when a CPU time limit of 3,600 seconds (s) or 7,200s was reached. As a benchmark, the CPLEX MIP solver was applied to the deterministic equivalent problem (DEP) of each test instance. Next we describe how the multidimensional knapsack test instances were generated and then report the computational results in Section \ref{sec-results}.

\subsection{Test Instances Generation}\label{sec-testgen}
We created a test problem with knapsack constraints in the first-stage, and both knapsack and assignment constraints in the second-stage. The first-stage problem is given as follows:
\begin{equation}
	\begin{alignedat}{2}\label{eq-smip1}
		\Max   & \sum_{i=1}^{n_1} c_i^\top x_i +  \mathcal{Q}_{E}(x) && \\
		\text{s.t. }        & \sum_{i=1}^{n_1} x_i \leq b           && \\
		& x_i \in \{0,1\}, \, \, \forall i=1\ldots n_1.                               &&
	\end{alignedat}
\end{equation}

In problem \eqref{eq-smip1}, $c \in \Re^{n_1}$ is the first-stage cost vector, and $b \in \Re$ is the first-stage right hand side. The function $\mathcal{Q}_{E}(x)$ is the expected recourse function  given as
\begin{equation}\label{eqn2}
	\mathcal{Q}_{E}(x) = \mathbb{E}_\omega \Phi(q(\om),h(\om),T(\om)x),
\end{equation}
where $\om$ is a multivariate random variable and $\mathcal{Q}_{E}(.)$ denotes the mathematical expectation operator satisfying $\mathbb{E}_\omega \left[ \mid\Phi(q(\om),h(\om),T(\om)x)\mid\right]  < \infty$. The underlying probability distribution of $\om$ is discrete with a finite number of realizations (scenarios) with sample space $\Omega$, and corresponding probabilities $p_\om, \omega \in \Omega$. Thus for a given scenario $\om \in \Omega$, the recourse function $\Phi(q(\om),h(\om),T(\om)x)$ is given by the following second-stage MIP:
\begin{equation}
	\begin{alignedat}{2}\label{eq-SIP2}
		\Phi(q(\om),h(\om),T(\om)x) = \Max         & \sum_{i=1}^{n_2} q(\om)^{i\top} y(\om)^{i}       && \\
		\text{s.t. } & \sum_{i=1}^{n_2} w^{ij} y(\om)^{i} \leq  \sum_{i=1}^{n_1} m \cdot t^i x_i \qquad \forall j=1\ldots m_1 && \\
		& \sum_{i=1}^{n_2} v^{ik} y(\om)^{i} \leq h(\om)^k \qquad \forall k=1\ldots m_2 && \\
		& \sum_{i=1}^{n_2} u^{i\ell} y(\om)^{i} = r^\ell \qquad \forall \ell=1\ldots m_3 && \\
		& 0 \leq y(\om)^i \leq u^i, y(\om)^i \in \mathbb{Z^+} \qquad \forall i=1\ldots n_2.               &&
	\end{alignedat}
\end{equation}
In formulation \eqref{eq-SIP2}, $y(\om)$ is the recourse decision vector,  $q(\om) \in \Re^{n_2}$ is the recourse cost vector, $w^{ij} \in \Re^{+}$ is a fixed recourse parameter, and $t^{i} \in \Re^{+}, v^{ik} \in \Re^{+}, u^{i\ell} \in \Re^{+}$ is a parameter taking values 0 or 1, $m$ is a constant, and $h(\om)^{k} \in \Re^{m_2}, r^{\ell} \in \Re^{m_3}$ are the right hand side parameters. The decision vector $y(\om)$ is bounded above by vector $u$. Finally, $\mathbb{Z^+}$ is the set of nonnegative integers. Observe that formulation \eqref{eq-smip1}-\eqref{eq-SIP2} has knapsack constraints in both the first- and second-stages.

In a supply chain context, the first-stage decision vector $x$ specifies the selection of facilities, mode of transportation, and/or resources. For a realization $\om$, the second-stage decision vector $y(\om)$ could be the amount of products produced or transported based on the strategic decision $x$ from the first-stage. Additionally, knapsack-type constraints are added to represent capacity limitations in the second-stage.

Test instance data were randomly generated using the uniform distribution ($\mathcal{U}$) with different parameter values. The knapsack weights were generated by sampling from $\mathcal{U}(2, 8)$. Objective function coefficients were generated with the first-stage costs chosen so that they are much larger than second-stage costs. The first- and second-stage objective function coefficients were generated by sampling from $\mathcal{U}(0, 1500)$ and $\mathcal{U}(10, 20)$, respectively. To generate tighter knapsack constraints, the right hand side value of each constraint was generated by finding the maximum knapsack weight ($W_{max}$) for the constraint, and then sampling from $\mathcal{U}(2+(2W_{max}*v_{ub}), 4W_{max}*v_{ub})$, where $v_{ub}$ is the upper bound for the integer variables. We assume that each scenario has equal probability of occurrence.

The problem characteristics are given in Table \ref{tab:kpdimension}. The columns of the table are problem name, `Scens' is the number of scenarios, `Bvars' is the number of binary variables, `Constr' is the number of constraints, and `Nzeros' is the number of non-zero elements for each of the problem instances. The problem name has the form $k.m.n.S$, where $k$ stands for `knapsack', $m$ and $n$ is the number of first- and second-stage decision variables, respectively, and $S$ is the number of scenarios. Two test sets, `Set 1' and 'Set 2' were created, where the first set has relatively smaller size instances compared to the second set. Specifically, Set 1 has test instances with 10 binary and 20 general integer variables in the first- and second-stage, respectively. In this set the test instances were created for 50, 100, 150, and 200 scenarios. Set 2 has test instances with 20 binary and 30 general integer variables in the first- and second-stage, respectively. In the instances, first-stage has 10 constraints and second-stage has 20 and 30 constraints in Set 1 and Set 2, respectively. Test instances for this set were created for 500, 1,000, 1,500, and 2,000 scenarios. Five randomly generated replications were created for each instance size to avoid pathological cases.

\begin{table}[!ht]
	\begin{center}
		\begin{tabular}{|l|c|c|c|c|c|}
			\hline
			Problem    & Scens & Bvars & Ivars & Constr & Nzeros \\
			\hline
			\multicolumn{6}{|c|}{\emph{Set 1 }} \\
			\hline
			k.10.20.50      & 50 & 10 & 1,000 & 1,010 & 12,510\\
			k.10.20.100     & 100 & 10 & 2,000 & 2,010 & 25,010\\
			k.10.20.150     & 150 & 10 & 3,000 & 3,010 & 37,510\\
			k.10.20.200     & 200 & 10 & 4,000 & 4,010 & 50,010\\
			\hline
			\multicolumn{6}{|c|}{\emph{Set 2}} \\
			\hline
			k.10.30.50      & 50 & 10 & 1,500 & 1,510 & 16,510\\
			k.10.30.100     & 100 & 10 & 3,000 & 3,010 & 33,010\\
			k.10.30.150     & 150 & 10 & 4,500 & 4,510 & 49,510\\
			k.10.30.200     & 200 & 10 & 6,000 & 6,010 & 66,010\\
			\hline
		\end{tabular}
	\end{center}
	\caption{Test instance characteristics in terms of the DEP }
	\label{tab:kpdimension}
\end{table}

\subsection{Results} \label{sec-results}
Detailed computational results for Set 1 and Set 2 are reported in Tables \ref{tab:t1} and \ref{tab:t2}, respectively. In the tables, `SFD' and `SFD-R' represent the results using SFD algorithm without ISG and with ISG algorithm, respectively. In the tables the column `Instance' is the name of the test instance. The three columns under SFD and SFD-R, respectively,  are as follows: `MIPs' is the number of MIPs solved using the respective procedure; `FCuts' is the number of Fenchel cuts; and `\%Gap' is the percentage gap between the lower bound (LB) and the upper bound (UB) value after the stipulated runtime (3600s for Set 1 and 7200s for Set 2). Finally,  the last column `\%Gap' shows the CPLEX MIP gap after solving the DEP for the designated amount of time.

\begin{table}[htpb]
	\centering
	\scalebox{1.00}{		
	\resizebox{\columnwidth}{!}{%
			\begin{tabular}{|c|c|c|c|c|c|c|c|c|}
			\toprule
			& \multicolumn{1}{c|}{} & \multicolumn{3}{c|}{SFD} & \multicolumn{3}{c|}{SFD-R} & \multicolumn{1}{c|}{CPLEX} \\
			\hline
			No. & Instance & MIPs & FCuts & \%Gap & MIPs & FCuts & \%Gap  & \%Gap \\
			\hline 
			1 & k.10.20.50a & 38,523 & 376 & 5.36 & 60,049 & 564 & 3.58 & 10.05\\
			2 & k.10.20.50b & 42,435 & 392 & 4.23 & 59,263 & 539 & 2.86 & 8.83\\
			3 & k.10.20.50c & 40,203 & 384 & 5.05 & 62,035 & 576 & 3.29 & 10.19\\
			4 & k.10.20.50d & 37,262 & 360 & 5.43 & 58,391 & 540 & 3.83 & 9.67\\
			5 & k.10.20.50e & 40,254 & 384 & 4.53 & 61,607 & 576 & 2.90 & 8.45\\\hline
			& Average& 39,735 & 379 & 4.92 & 60,269 & 559 & 3.29 & 9.44\\\hline
			6&k.10.20.100a & 47,567 & 475 & 5.81 & 68,696 & 665 & 4.65 & 9.12\\
			7&k.10.20.100b & 44,192 & 460 & 6.59 & 64,316 & 644 & 5.55 & 10.11\\
			8&k.10.20.100c & 44,618 & 460 & 6.56 & 64,468 & 644 & 5.53 & 10.14\\
			9&k.10.20.100d & 43,995 & 455 & 7.00 & 73,372 & 728 & 5.37 & 10.70\\
			10&k.10.20.100e & 44,661 & 465 & 6.79 & 65,232 & 651 & 5.50 & 10.73\\\hline
			&Average& 45,006 & 463 & 6.55 & 67,217 & 666 & 5.32 & 10.16\\\hline
			11&k.10.20.150a & 53,168 & 548 & 7.19 & 66,631 & 685 & 6.55 & 10.21\\
			12&k.10.20.150b & 52,828 & 544 & 7.16 & 66,095 & 680 & 6.52 & 9.95\\
			13&k.10.20.150c & 51,866 & 532 & 7.46 & 64,808 & 665 & 6.83 & 10.74\\
			14&k.10.20.150d & 53,372 & 556 & 7.15 & 67,149 & 695 & 6.48 & 10.78\\
			15&k.10.20.150e & 53,047 & 552 & 7.53 & 66,684 & 690 & 6.88 & 10.84\\\hline
			&Average& 52,856 & 546 & 7.30 & 66,273 & 683 &  6.65 & 10.50\\\hline
			16&k.10.20.200a & 51,734 & 549 & 7.91 & 69,173 & 732 & 7.32 & 10.46\\
			17&k.10.20.200b & 52,206 & 555 & 7.51 & 71,248 & 740 & 6.77 & 9.68\\
			18&k.10.20.200c & 51,164 & 540 & 8.20 & 68,586 & 720 & 7.48 & 10.73\\
			19&k.10.20.200d & 50,374 & 540 & 7.86 & 67,524 & 720 & 7.10 & 10.12\\
			20&k.10.20.200e & 52,043 & 555 & 7.88 & 70,003 & 740 & 7.19 & 10.96\\\hline
			&Average& 51,504 & 547 & 7.87 & 69,307 & 730 &  7.17 & 10.39\\\hline
			\bottomrule
		\end{tabular}%
	}}
	\caption{Computational Results Set1 Instances (Run Time: 3,600s)}
	\label{tab:t1}
\end{table}

\begin{table}[htpb]
	\centering
	\scalebox{1.00}{		
	\resizebox{\columnwidth}{!}{%
		\begin{tabular}{|c|c|c|c|c|c|c|c|c|c|}
			\toprule
			& \multicolumn{1}{c|}{} & \multicolumn{3}{c|}{SFD} & \multicolumn{3}{c|}{SFD-R} & \multicolumn{1}{c|}{CPLEX} \\
			\hline
			No. & Instance & MIPs & FCuts & \%Gap & MIPs & FCuts & \%Gap & \%Gap \\
			\hline 
			1&k.10.30.50a&22,483&482&4.04&22,542&491&3.99&4.50\\
			2&k.10.30.50b&20,340&433&6.11&21,935&466&5.31&4.59\\
			3&k.10.30.50c&24,807&529&4.29&25,086&538&3.96&5.84\\
			4&k.10.30.50d&24,432&563&4.10&27,025&592&3.37&5.80\\
			5&k.10.30.50e&24,339&520&3.70&25,009&520&3.04&4.80\\\hline
			&Average&23,280&505&4.45&24,319&521&3.94&5.11\\\hline
			6&k.10.30.100a&27,073&566&4.40&27,452&571&1.04&4.46\\
			7&k.10.30.100b&24,476&624&9.98&26,141&664&6.16&12.40\\
			8&k.10.30.100c&18,711&308&0.86&18,816&313&0.86&6.21\\
			9&k.10.30.100d&26,375&548&6.06&26,663&565&5.97&9.06\\
			10&k.10.30.100e&25,193&550&4.53&26,985&603&2.15&5.55\\\hline
			&Average&24,366&519&5.17&25,211&543&3.24&7.54\\\hline
			11&k.10.30.150a&26,940&739&6.38&27,675&762&5.01&10.78\\
			12&k.10.30.150b&28,677&646&4.66&28,722&651&4.66&8.46\\
			13&k.10.30.150c&24,328&549&7.40&24,333&558&3.65&5.13\\
			14&k.10.30.150d&27,645&611&4.12&29,178&789&1.10&4.54\\
			15&k.10.30.150e&25,312&552&4.28&26,383&582&4.21&12.63\\\hline
			&Average&26,580&619&5.37&27,258&668&3.73&8.31\\\hline
			16&k.10.30.200a&25,817&592&5.21&26,833&610&2.87&4.48\\
			17&k.10.30.200b&27,392&605&4.35&27,944&618&3.62&4.45\\
			18&k.10.30.200c&24,211&555&7.50&25,187&584&7.45&9.45\\
			19&k.10.30.200d&25,471&600&5.16&26,097&609&3.78&7.20\\
			20&k.10.30.200e&24,882&570&8.00&25,884&595&7.98&8.78\\\hline
			&Average&25,555&584&6.04&26,389&603&5.14&6.87\\\hline
			\bottomrule
		\end{tabular}%
	}}
	\caption{Computational Results Set2 Instances (Run Time: 7,200s)}
	\label{tab:t2}
\end{table}

We can see from Tables \ref{tab:t1} and \ref{tab:t2} that none of the algorithms is able to solve any single test instance to optimality within the allotted time, an indication of the difficulty of these instances. However, both the SFD and SFD-R algorithms are able to obtain better bounds than using the direct solver applied to the DEP. In Table \ref{tab:t1} we see that the SFD-R algorithm provides better performance on average over the SFD algorithms in terms of the percentage gap. The gains are much more significant for Set 1 than for Set 2. The results show that incorporating ISG in the SFD algorithm provides gains in gap reduction for both Set 1 and Set 2. This is an indication that reducing the integer set required for generating cuts in an SMIP algorithm can lead to better bounds for SMIP. To see the performance of each method on a given test instance, we plotted the percentage gap versus the instance in Figure \ref{fig:gap}. The graph clearly shows that the SFD-R algorithm provides the best performance overall.
\begin{figure}[H]
	\centering
	\includegraphics[scale=0.55]{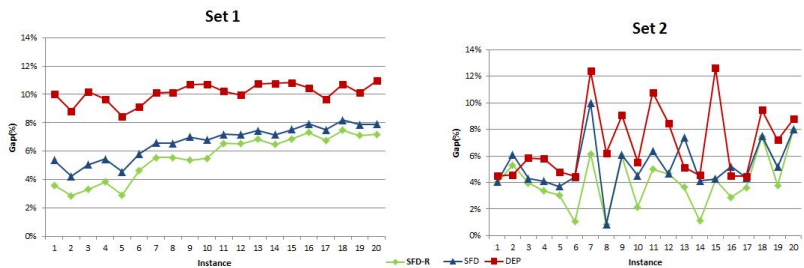}
	\caption{Instances Gap (\%) Runtime (Set 1 - One Hour, Set 2 - Two Hours  )}
	\label{fig:gap}
\end{figure}

We also wanted to look at the number of subproblem MIPs that were solved in the FCG routine under each algorithm. Solving more cut generation MIPs implies fast performance in terms of generating a cut. The results are shown in Figure \ref{fig:mips}, where `\#MIPs' represents the number of MIPs solved using the SFD (without ISG) and SFD-R (with ISG) algorithms. The results clearly show that the SFD-R algorithm solves relatively more MIPs, and thus generates more FD cuts (Figure \ref{fig:fc}), than the SFD algorithm. Finally, we should point out that the larger size test instances (Set 2) generally requires more cuts than the smaller size test instances (Set 1).
\begin{figure}[H]
	\centering
	\includegraphics[scale=0.55]{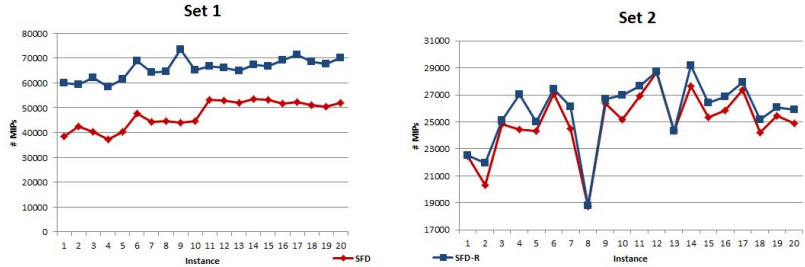}
	\caption{\# MIPs Solved}
	\label{fig:mips}
\end{figure}

\begin{figure}[H]
	\centering
	\includegraphics[scale=0.60]{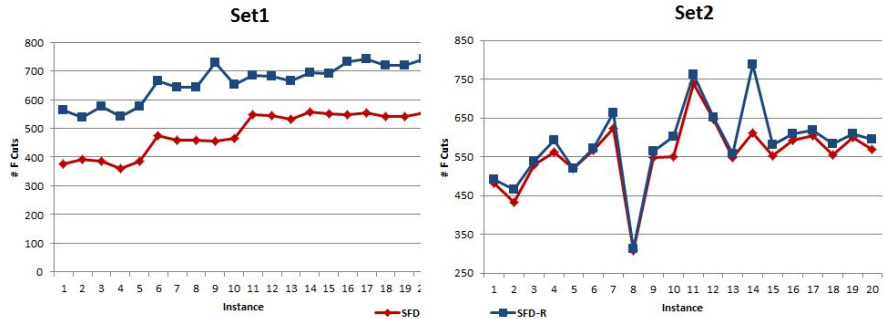}
	\caption{\# Fenchel Cuts}
	\label{fig:fc}
\end{figure}

\section{Conclusion} \label{sec-lconcl}
This work introduces a new integer set reduction procedure for cutting plane methods for SMIP with general integer variables in the second-stage. Example illustrations of the new method in the context of generating Fenchel cutting planes are given. The method is then incorporated into the Fenchel decomposition algorithm for SMIP and a computational study is performed to assess the benefits of the new approach. The results from the computational study show that incorporating integer set reduction in the Fenchel decomposition method leads in having better bounds and provides better performance than a direct solver applied to the deterministic equivalent problem. Also, more cuts are generated in a given time period when integer set reduction is used as opposed to when it is not used. Future work along this line of work include extending the integer set reduction procedure to SMIP with arbitrary or general recourse matrices. Another extension is to incorporate and evaluate the new procedure in other cutting plane methods for SMIP such as disjunctive decomposition and dual decomposition.

\bibliographystyle{plain}
\bibliography{SWFD}

\end{document}